\documentclass[12pt]{article}
\usepackage{amssymb}
\usepackage{amsfonts}
\usepackage{amsmath}
\usepackage[usenames]{color}
\usepackage{mathrsfs}
\usepackage{amsfonts}
\usepackage{amssymb,amsmath}
\usepackage{CJK}
\usepackage{cite}
\usepackage{cases}
\usepackage{amsthm}
\usepackage{amssymb}
\usepackage{graphicx}
\usepackage{amsmath}
\usepackage[all]{xy}
\usepackage{enumerate}
\usepackage{amscd}
\usepackage{cases}

\pagestyle{plain}
\oddsidemargin -25pt
\evensidemargin -25pt
\topmargin -40pt
\textwidth 6.5truein
\textheight 9.35truein
\parskip .01 truein
\baselineskip 6pt

\def\cl{\centerline}

\def\G{\mathcal{G}}
\def\b{\beta}
\def\vs{\vspace*}
\def\S{\mathcal{S}}
\def\U{\mathcal{U}}

\def\V{\mathcal{V}}
\def\B{\mathcal{B}}
\def\M{\mathfrak{M}}
\def\Z{\mathbb{Z}}
\def\N{\mathbb{N}}
\def\b{\mathfrak{b}}

\def\S{\mathcal{S}}
\def\P{\mathcal{P}}
\def\W{\mathfrak{W}}
\def\R{\mathcal{R}}
\def\G{\mathcal{G}}
\def\B{\mathcal{B}}
\def\C{\mathbb{C}}

\def\X{\mathbb{X}}

\def\ni{\noindent}

\numberwithin{equation}{section}
\newtheorem{theo}{Theorem}[section]
\newtheorem{defi}[theo]{Definition}

\newtheorem{lemm}[theo]{Lemma}
\newtheorem{prop}[theo]{Proposition}
\newtheorem{clai}{Claim}

\begin{document}
\begin{center}
\cl{\large\bf \vs{8pt}  Simple restricted modules over the       $N=1$ Ramond algebra}
\cl{\large\bf \vs{8pt} as weak modules   for vertex operator superalgebras}
\cl{ Haibo Chen}
\cl{Dedicated to my supervisor Professor Yucai Su on his 60th birthday}
\end{center}
\footnote {
H. Chen (hypo1025@163.com).
}
{\small
\parskip .005 truein
\baselineskip 3pt \lineskip 3pt

\noindent{{\bf Abstract:}
In the present paper,   a class of new simple modules over the    $N=1$ Ramond algebra are constructed, which  are induced from simple modules over some finite dimensional solvable Lie superalgebras.
These new    modules  are simple       restricted   modules over the $N=1$    Ramond algebra.   Combined with the result in \cite{L1}, a classification of simple weak $\psi$-twisted $\bar W(0,c)$-modules under certain conditions is also given. At last, some examples of simple restricted   $N=1$    Ramond  modules as  various versions of Whittaker modules are presented (classical Whittaker modules were studied in \cite{LPX}).
\vs{5pt}

\ni{\bf Key words:}
  $N=1$ Ramond algebra,  restricted module,  simple module,  vertex operator superalgebra.}

\ni{\it Mathematics Subject Classification (2020):} 17B10, 17B65, 17B68, 17B69.}
\parskip .001 truein\baselineskip 6pt \lineskip 6pt
%\tableofcontents
\section{Introduction}
 \textit{The  $N=1$ Ramond algebra}   is an  infinite dimensional  Lie superalgebra
$$\R=\bigoplus_{m\in\Z}\C L_m \oplus\bigoplus_{m\in\Z}\C G_{m} \oplus\C C,$$
which  satisfies  the following Lie super-brackets
 \begin{equation}\label{def55441.1}
\aligned
&[L_m,L_n]= (n-m)L_{m+n}+\frac{n^{3}-n}{12}\delta_{m+n,0}C,\\&
 [G_m,G_n]= -2L_{m+n}+\frac{4m^{2}-1}{12}\delta_{m+n,0}C,\\&
 [L_m,G_n]= (n-\frac{m}{2})G_{m+n},\ [\R,C]=0,
\endaligned
\end{equation}
where $m,n\in\Z$.
By definition, we have the following decomposition:
$$\R=\R_{\bar 0}\oplus\R_{\bar 1},$$
where $\R_{\bar 0}=\mathrm{span}\{L_m,C\mid m\in\Z\}$, $\R_{\bar 1}=\mathrm{span}\{G_m\mid m\in\Z\}$.
  Notice that the even part  $\R_{\bar 0}$   is isomorphic to
 the  classical Virasoro algebra.
   Clearly, $\C C$ is the center of $\R$.
Let $\R_m=\mathrm{span}\{L_{m},G_{m},\delta_{m,0}C\}$ for all $m\in\Z$.
Then $[\R_m,\R_n]\subset\R_{m+n}$, and $\R$,  $\U(\R)$ are $\Z$-graded.
 It is easy to see that $\R$ has the following triangular decomposition:
$$\R=\R_+\oplus\R_0\oplus\R_-,$$ where $\R_+=\mathrm{span}\{L_{m},G_{m}\mid m\in\Z_+\}$ and $\R_-=\mathrm{span}\{L_{m},G_{m}\mid -m\in\Z_+\}$.

The   representation theory  of the $N=1$ Ramond algebra   has   attracted
a lot of attention from many researchers.  The structure of Verma modules       over the $N=1$ Ramond algebra  was  investigated  by some physicists and mathematicians (see, e.g., \cite{D1,D2,D3,IK}).  The structures of   Fock modules
and pre-Verma  modules    for  the $N=1$ Ramond algebra  were respectively studied   in \cite{IK1,IK2}.  All  simple Harish-Chandra modules over the  $N=1$ Ramond algebra  were
classified  in \cite{S}  (also see \cite{CLL}).
 Recently, some non-weight modules as Whittaker modules and  $\U(\C L_0\oplus \C G_0)$-free modules of rank $1$ over the    $N=1$ Ramond algebra were studied in \cite{LPX,YYX2}, respectively. Moreover, a class of  non-weight  modules over  the  $N=1$ Ramond algebra
were developed in \cite{CDL}, which include  super intermediate series modules, $\U(\C L_0)$-free modules of rank $2$ and so on.
Clearly,  the $N=1$ Ramond algebra can be seen as a certain supersymmetric extension  of
the Virasoro algebra.   However, the representation theory of the $N=1$ Ramond algebra is far less abundant than the Virasoro algebra.

Highest weight modules and Whittaker modules are two classes of classical representation of Lie (super)algebra, which  both  belong  to the category of restricted modules.
In \cite{MZ},    a generalized construction for simple Virasoro modules was given by Mazorchuk and Zhao, which  included highest weight modules  and various version
 Whittaker modules. These simple Virasoro modules  were     restricted modules.  In \cite{L0,CCHS},
 a class of weak modules over the Virasoro vertex operator algebra
$V(c,0)$  were classified  by  the restricted Virasoro modules of the same level. From then on,
to  enrich the representation theory of infinite dimensional Lie (super)algebras, the restricted modules
over some other Lie (super)algebras such as twisted (mirror) Heisenberg-Virasoro algebras,  $N=1$ Neveu-Schwarz algebras, gap-$p$ Virasoro algebras, affine Lie algebras $A_1^{(1)}$ were   investigated (see, e.g., \cite{TYZ,G,LPX55,LPXZ,CGHS,L,CG,ALZ}). In  Proposition 4.1 of \cite{L1}, they showed that   any restricted module for
the $N=1$ Ramond algebra   of central charge $c$ is a weak $\psi$-twisted $\bar{W}(0,c)$-module.
In order to determine those weak modules,  we  need to give a characterization  for   restricted modules over
the $N=1$ Ramond algebra.  Motivated by this, we consider   writing this paper.

The rest of this paper is organized as follows.
In Section $2$,   some definitions and notations of restricted modules are presented for later use.
In Section $3$, a class of new simple  restricted modules over the    $N=1$  Ramond algebra  are constructed in Theorem \ref{th1}.
In Section $4$, we give a characterization of simple restricted $\R$-modules under certain conditions in Theorem \ref{th2}, which reduces the problem of classification of simple restricted
$\R$-modules to classification of simple modules over a class of finite-dimensional solvable Lie superalgebras. In Section $5$,    we see that those induced simple $\R$-modules defined in Theorem \ref{th1} are weak $\psi$-twisted $\bar W(0,c)$-modules.  Finally,   some examples of restricted $\R$-modules are given, such as   Whittaker
modules  and high order Whittaker modules.

Throughout this paper,  we denote by $\C$,  $\Z$, $\N$ and $\Z_+$ the sets of complex numbers, integers,
 nonnegative integers and positive integers, respectively.
    All vector superspaces
(resp. superalgebras, supermodules)   and
 spaces (resp. algebras, modules)    are considered to be over
$\C$. We use $\U(\mathfrak{a})$ to denote
the universal enveloping algebra for a Lie (super)algebra $\mathfrak{a}$.
\section{Preliminaries}
Let  $M=M_{\bar0}\oplus M_{\bar1}$ be a $\Z_2$-graded vector space. We say that any element $v\in M_{\bar0}$ (resp. $v\in M_{\bar1}$)
is    {\em even}    (resp. {\em odd}). Set $|v|=0$ if
$v\in M_{\bar0}$ and $|v|=1$ if $v\in M_{\bar1}$. We call that all elements  in  $M_{\bar0}$ or  $M_{\bar1}$ are   {\em homogeneous}.
 Throughout this paper, all elements in Lie superalgebras and modules are homogenous.      All modules for Lie superalgebras   are $\Z_2$-graded.
All simple modules over Lie (super)algebras are  non-trivial  unless
specified.

Let $\G$ be a Lie superalgebra. {\em A  $\G$-module} is a $\Z_2$-graded vector space $M$ together
with a bilinear map $\G\times M\rightarrow M$, denoted $(x,v)\mapsto xv$ satisfying the following conditions
$$x(yv)-(-1)^{|x||y|}y(xv)=[x,y]v,\
\G_{\bar i} M_{\bar j}\subseteq  M_{\bar i+\bar j}$$
for all $x, y \in \G, v \in M$. It is clear that there is a parity-change functor $\Pi$ on the category of
$\G$-modules to itself.
\begin{defi}\rm
Assume   that $M$  is a  $\G$-module   and $x\in\G$.

\begin{itemize}
\item[{\rm (i)}] If for any $v\in M$ there exists $m\in\Z_+$ such that $x^mv=0$,  we say that   the action of  $x$  on $M$ is {\em locally nilpotent}.
Similarly, if for any $v\in M$ there exists $m\in\Z_+$ such that $\G^mv=0$, we say that  the action of $\G$   on $M$  is  {\em locally nilpotent}.

\item[{\rm (ii)}]  If for any $v\in M$  we have  $\mathrm{dim}(\sum_{m\in\Z_+}\C x^mv)<+\infty$,  we call that the   $x\in\G$ acts  {\em locally finite} on $M$.
Similarly, if
for any $v\in M$ we get $\mathrm{dim}(\sum_{m\in\Z_+} {\G}^mv)<+\infty$,
 we call that  the  $\G$  acts  {\em locally finite}  on $M$.
 \end{itemize}
\end{defi}
The action of $x$ on $M$ is locally  nilpotent, this   implies that  the action of $x$ on $M$ is locally finite.  Generally speaking, it is not true for any Lie (super)algebra $\G$. But
if $\G$ is a finitely generated Lie (super)algebra,   that  the action of
 $\G$
 on $M$ is locally  nilpotent  implies that   the action of $\G$ on $M$ is locally finite.

\begin{defi}\rm
Assume that  $\G=\bigoplus_{m\in\Z}\G_m$ is  a $\Z$-graded Lie superalgebra. A $\G$-module $M$ is called the
{\em restricted} module if for any $v\in M$ there exists $k\in\N$ such that $\G_mv=0$  for $m>k$.
\end{defi}

For simplicity,    write $\X=\{0,1\}$.  We denote by   $\S$   the set of all infinite vectors of the form $\mathbf{i}:=(\ldots, i_2, i_1)$ with   $i_{2m-1}\in\N,i_{2m}\in \X,m\in\Z_+$,
satisfying the condition that the number of nonzero entries is finite.
For
$k\in\Z_+$, write  $\epsilon_k=(\ldots,\delta_{k,3},\delta_{k,2},\delta_{k,1})$   and $\mathbf{0}=(\ldots, 0, 0)$. Denote
$$\W(\chi)=m \quad  \mathrm{if}\quad  0\neq \chi\in \U(\R)_{-m},\quad \forall m\in\N.$$  For    $\mathbf{i}\in \S$,   set
\begin{eqnarray}\label{w12112}
g_{\mathbf{i}}=\cdots \big(G_{-k+1}^{i_{2k}}L_{-k}^{i_{2k-1}}\big) \cdots \big(
G_{-2}^{i_6}L_{{-3}}^{i_{5}}\big)\big(
G_{-1}^{i_4}L_{{-2}}^{i_{3}}\big)\big(G_0^{i_2}L_{-1}^{i_1}\big)\in \U(\R_-\oplus \R_0),
\end{eqnarray}
where $i_{2k-1}\in\N,i_{2k}\in\X, k\in\Z_+$.
Then we have $$\W(\mathbf{i}):=\W(g_{\mathbf{i}})
=\sum_{k=1}^{+\infty}ki_{2k-1}+\sum_{k=1}^{+\infty}(k-1)i_{2k}.$$
Denote
$$\mathbf{D}(\mathbf{i}):=\mathbf{D}(g_{\mathbf{i}})
=\sum_{k=1}^{+\infty}(i_{2k-1}+i_{2k}).
$$

The following total order on  $\S$ can be found in \cite{MZ}.
\begin{defi}\rm\label{defin421}
 We denote by $>$ the   {\em reverse  lexicographical  total order}  on  $\S$,  defined as follows:
\begin{itemize}
\item[{\rm (a)}]
  $\mathbf{0}$ is the minimum element;
 \item[{\rm (b)}]
  for different nonzero $\mathbf{i},\mathbf{j}\in\S$, we have
$$\mathbf{j} > \mathbf{i} \ \Longleftrightarrow
 \ \mathrm{ there\ exists} \ m\in\Z_+ \ \mathrm{such \ that} \ (j_k=i_k,\ \forall 0< k<m) \ \mathrm{and} \ j_m>i_m.$$
 \end{itemize}
\end{defi}
\begin{defi}\rm
By using the above reverse lexicographical total order, we can   define the   {\em  principal total order}  $\succ$ on $\S$: for different  $\mathbf{i},\mathbf{j}\in \S$, set $\mathbf{i}\succ\mathbf{j}$ if and only if one of the following condition is satisfied:
\begin{itemize}
\item[{\rm (a)}]  $\W(\mathbf{i})> \W(\mathbf{j})$;

\item[{\rm (b)}] $\W(\mathbf{i})= \W(\mathbf{j})$ and $\mathbf{D}(\mathbf{i})> \mathbf{D}(\mathbf{j})$;

    \item[{\rm (c)}] $\W(\mathbf{i})= \W(\mathbf{j})$, $\mathbf{D}(\mathbf{i})= \mathbf{D}(\mathbf{j})$
        and $\mathbf{i}>\mathbf{j}$.
\end{itemize}
\end{defi}
For any   simple module  $V$ over  $\R$ or one of its subalgebra   containing the central element $C$,
  we denote that  the action  of $C$ on   $V$ is a scalar  $c$.
  Write $\B:=\R_+\oplus\C L_0\oplus \C C$.  Let $V$ be a simple $\B$-module.
 We have the following  induced   $\R$-module
$$\mathrm{Ind}_{\B,c}(V)=\U(\R)\otimes_{\U(\B)}V.$$
By the $\mathrm{PBW}$ Theorem (see \cite{CW}), and $G_{-m}^2=-L_{-2m}$  for $m\in\N$, every element of $\mathrm{Ind}_{\B,c}(V)$ can be uniquely written as
the following form
\begin{equation}\label{def2.1}
\sum_{\mathbf{i}\in\S}g_{\mathbf{i}} v_{\mathbf{i},c}
\end{equation}
where $g_{\mathbf{i}}$ defined as \eqref{w12112},    $v_{\mathbf{i},c}\in V$ and only finitely many of them are nonzero. For any $w\in\mathrm{Ind}_{\B,c}(V)$ as in  \eqref{def2.1}, we write  $\mathrm{supp}(w)$ the set of all $\mathbf{i}\in \S$  such that $v_{\mathbf{i},c}\neq0$.
 For   $0\neq w\in \mathrm{Ind}_{\B,c}(V)$,
 we denote by $\mathrm{deg}(w)$  the maximal element in $\mathrm{supp}(w)$ (with
respect to  the principal total order on $\S$),
  which is called the {\em degree} of $w$. Note that $\mathrm{deg}(w)$ is defined only for  $w\neq0$.
Let
$\W(w)=\W(\mathrm{deg}(w))$ and $\W(0)=-\infty$.
For any $m\in\N$, set
$$\mathrm{supp}_m(w)=\{\mathbf{i}\in \mathrm{supp}(w)\mid \W(\mathbf{i})=m\}.$$

\section{Construction of   simple  $\R$-modules}
For $k\in\Z$, denote $\Z_{\geq k}=\{m\in\Z\mid m\geq k\}$. Now  we give a characterization for simple $\B$-modules.
\begin{lemm}\label{lemm31}
Let $k,m\in\Z, t\in\Z_{\geq2}$ and     $V$ be a simple   $\B$-module with   $L_{k}V=0$ for all $k> t$.
Then we get
  $G_{m}V=0$ for all $m> t$.
\end{lemm}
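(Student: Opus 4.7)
The plan is to introduce the subspace $W = \{v \in V \mid G_m v = 0 \text{ for all } m > t\}$ and argue $W = V$ by simplicity, which reduces the lemma to showing that $W$ is a nonzero $\B$-submodule. Verifying that $W$ is a $\B$-submodule should be a direct calculation: for $v \in W$, $m > t$, and any homogeneous generator $x \in \{L_0, C\} \cup \{L_j, G_j \mid j \geq 1\}$ of $\B$, one expands $G_m(xv)$ via the super-bracket as $\pm x G_m v$ (which is zero since $v \in W$) plus a commutator term that, by \eqref{def55441.1}, is a scalar multiple of $L_{m+j}v$ or $G_{m+j}v$; since $m + j > t$, these vanish by the hypothesis $L_k V = 0$ and by $v \in W$, respectively.

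The main work is to prove $W \neq 0$. I would first establish, without using simplicity, that $G_n V = 0$ for every $n \geq t + 2$. For such $n$ I pick an integer $k$ with $t + 1 \leq k \leq n - 1$ and $k \neq 2n/3$; this range is nonempty since $n \geq t+2$, and in the boundary case $n = t + 2$ the only candidate $k = t + 1$ still satisfies $k \neq 2n/3$ because $t \geq 2$, while for larger $n$ the range contains at least two integers so we can always dodge the single excluded value $2n/3$. Then applying the identity $[L_k, G_{n-k}] = (n - 3k/2)\, G_n$ to an arbitrary $v \in V$ gives $(n - 3k/2) G_n v = L_k G_{n-k} v - G_{n-k} L_k v$, and both terms on the right are zero because $L_k V = 0$; hence $G_n v = 0$ for every $v$.

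The remaining case $n = t + 1$ is the main obstacle, since the commutator trick above would require $k \leq n - 1 = t$, conflicting with $k > t$. To get around this, I would invoke the super-bracket $G_{t+1}^2 = -L_{2t+2}$ together with $2t + 2 > t$, which forces $G_{t+1}^2 V = 0$; hence $G_{t+1} V \subseteq \ker G_{t+1}$, and combined with the previous paragraph the subspace $G_{t+1} V$ lies entirely inside $W$. If $G_{t+1} V = 0$ we are done. Otherwise $W$ contains the nonzero subspace $G_{t+1} V$, so $W \neq 0$; then simplicity of $V$ yields $W = V$, which in turn forces $G_{t+1} V = 0$. Either branch concludes $G_m V = 0$ for all $m > t$, as claimed.
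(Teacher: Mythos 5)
Your argument is correct, and it is a genuine variant of the paper's proof rather than a reproduction of it: the two are dual in where simplicity is applied. The paper fixes a single $m>t$ and applies simplicity to the \emph{image} $\Upsilon=G_mV$: the relation $G_m^2=-L_{2m}$ shows $\Upsilon\neq V$, the bracket $[L_{m+l-1},G_1]$ kills $G_nV$ for all $n>m$, and these facts make $\Upsilon$ a $\B$-submodule, hence $\Upsilon=0$ for every $m>t$. You instead apply simplicity to the joint annihilator $W=\{v\in V\mid G_mv=0,\ m>t\}$: the same two ingredients occur, but your commutator step, with the more flexible choice $[L_k,G_{n-k}]$ for $k>t$ (and the checks that $t+1\le k\le n-1$ is nonempty and that $k=2n/3$ can be avoided because $t\ge2$), already gives $G_nV=0$ for all $n\ge t+2$ without any use of simplicity, while $G_{t+1}^2=-L_{2t+2}$ places $G_{t+1}V$ inside $W$, so simplicity is invoked only once, for the single boundary index $n=t+1$. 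Your verification that $W$ is a $\B$-submodule is complete (the central term never appears since $m+j>t\ge 2>0$), and the final dichotomy is sound, since in either branch one obtains $G_mV=0$ for all $m>t$. What each route buys: the paper's treats all $m>t$ uniformly through one proper submodule, whereas yours isolates exactly the one place where simplicity of $V$ is indispensable and shows the rest is forced by the bracket relations alone.
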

\begin{proof}
Choose    $m\in\Z$ with $m\geq t+1$.
 According to $G_{m}^2V=-L_{2m}V=0$, we know that  $\Upsilon=G_{m}V$ is a subspace
of $V$ and $\Upsilon\neq V$. For any $l\in\Z_+$, one gets
$$G_{m+l}V=\frac{2}{3-m-l}(L_{m+l-1}G_{1}-G_{1}L_{m+l-1})V=0.$$
Then for any $n\in\N$,   we check
$$ L_{n}\Upsilon=L_{n}G_{m}V=G_{m}L_{n}V+(m-\frac{n}{2})G_{m+n}V\subset \Upsilon.$$
For $k\in\Z_+$, it is clear that  $G_{k}\Upsilon \subset \Upsilon.$
Thus,  $\Upsilon$ is a  submodule of $V$. It follows from  the simplicity of $V$ that we get $\Upsilon=G_{m}V=0$ for all $m> t$.
\end{proof}

\begin{lemm}\label{lemm3231}
Let $m, k\in\Z, c\in\C, F_k=
L_k\ \mathrm{or}\ G_k$.   Let  $V$ be a simple   $\B$-module and there exists $r\in\Z_{\geq2}$  such that $V$   satisfying  the following two conditions  \begin{itemize}
\item[{\rm (1)}] the action of $L_{r}$ on   $V$  is  injective;
\item[{\rm (2)}]  $L_{m}V=G_rV=0$ for all $m>r$.
\end{itemize}
  Then for any $0\neq w\in\mathrm{Ind}_{\B,c}(V)$ with $\W(w)=q\in \Z$, $k\geq r$, we get
 \begin{itemize}
\item[{\rm (i)}]  $\mathrm{supp}_{q-k+r}(F_k w)\subset\big\{\mathbf{i}-\mathbf{j}\mid \mathbf{i}\in\mathrm{supp}_{q}(w),\W(\mathbf{j})=k-r\big\}$;
    \item[{\rm (ii)}] $\W(F_k w)\leq q-k+r$.
\end{itemize}
\end{lemm}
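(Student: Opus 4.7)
My plan is to prove (i) and (ii) simultaneously by analyzing $F_k g_{\mathbf{i}} v_{\mathbf{i},c}$ for each $\mathbf{i}\in\mathrm{supp}(w)$ and summing over $\mathbf{i}$. The key vanishing input is that, combining Lemma \ref{lemm31} with condition (2), $L_m V=0$ for $m>r$ and $G_m V=0$ for $m\geq r$; in particular $F_k v=0$ for every $v\in V$ unless $F_k=L_r$.

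I would first split $F_k g_{\mathbf{i}} v_{\mathbf{i},c}=\pm\, g_{\mathbf{i}} F_k v_{\mathbf{i},c}+[F_k,g_{\mathbf{i}}]v_{\mathbf{i},c}$, with the appropriate supersign when $F_k=G_k$. The un-commuted term survives only when $F_k=L_r$ (forcing $k=r$), and then equals $g_{\mathbf{i}}\cdot(L_r v_{\mathbf{i},c})$, which is already in PBW form with weight $\W(\mathbf{i})$; it can contribute to $\mathrm{supp}_{q-k+r}(F_k w)=\mathrm{supp}_q(F_k w)$ only when $\W(\mathbf{i})=q$, giving $\mathbf{i}=\mathbf{i}-\mathbf{0}$ with $\W(\mathbf{0})=0=k-r$, as required.

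For the commutator term I expand via the super-Leibniz rule and iterate PBW reduction. The key structural observation is that each PBW-reduced summand has a single ``moving generator'': initially $F_k$ of weight $k$, and after absorbing a factor $X_{-j_{\ell_s}}$ of $g_{\mathbf{i}}$ (via a single commutator step) it is replaced by a single generator of weight $k-j_{\ell_1}-\cdots-j_{\ell_s}$ (modulo central scalars). Hence each final summand is indexed by an absorption subset encoded as $\mathbf{j}\in\S$, and has the form $(\mathrm{scalar})\cdot g_{\mathbf{i}-\mathbf{j}}\cdot Y\cdot v_{\mathbf{i},c}$, where $Y$ is the final moving generator, of weight $k-\W(\mathbf{j})$.

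Weight-counting then finishes things. If $Y$ has weight $\geq 0$, then $Y v_{\mathbf{i},c}\neq 0$ requires $Y\neq G_r$ and $\mathrm{wt}(Y)\leq r$, forcing $\W(\mathbf{j})\geq k-r$, hence $\W(\mathbf{i}-\mathbf{j})\leq q-k+r$; equality throughout forces $\W(\mathbf{i})=q$, $\W(\mathbf{j})=k-r$, and $Y=L_r$. If instead $Y$ has weight $<0$, it is absorbed into the tail $g_{\mathbf{i}'}$ and a short computation yields $\W(\mathbf{i}')=\W(\mathbf{i})-k\leq q-k<q-k+r$ (using $r\geq 2$). Combining these cases gives (ii), and the equality analysis gives (i). The main obstacle I anticipate is rigorously justifying the ``single moving generator'' picture -- i.e., that iterated PBW reduction of $[F_k,g_{\mathbf{i}}]v_{\mathbf{i},c}$ really is indexed by absorption subsets $\mathbf{j}\in\S$ -- while keeping track of super-signs and possible central-scalar contributions. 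Once this combinatorial skeleton is in place, the weight inequalities reduce to a single-line count using the vanishing conditions on $V$.
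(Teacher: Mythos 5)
Your proposal is correct and follows essentially the same route as the paper's proof: split $F_k w$ into the un-commuted term plus $[F_k,g_{\mathbf{i}}]v_{\mathbf{i},c}$, reduce the commutator to PBW form with a single positive-degree ``moving'' generator transferred to the right, and weight-count using $L_mV=0$ for $m>r$ and $G_mV=0$ for $m\ge r$ (the latter via Lemma \ref{lemm31}). If anything, your write-up is more careful than the paper's, which only displays the case $k>r$ and leaves the $F_k=L_r$ contribution, the super-signs, and the central-term bookkeeping implicit.
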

\begin{proof}
{\rm (1)}
 By Lemma \ref{lemm31} and {\rm (2)}, we have  $G_mV=0$ for all $m>r-1$.
 Now  suppose that $w=g_{\mathbf{i}}v_{\mathbf{i},c}$ with $\W(\mathbf{i})=q\in\Z$. For   $k >r$ and any fixed $F_k$,  by using Lie super-brackets  in \eqref{def55441.1}, we may transfer the only positive degree term in $[F_k, g_{\mathbf{i}}]$ to the right side, i.e.,
$[F_k,g_{\mathbf{i}}]\in \sum_{m\in\{q-k,\ldots,q\}}\U(\R_{-}\oplus\R_0)_{-m}\R_{k+m-q}$. So
\begin{eqnarray}\label{113.1222}
F_k w=[F_k,g_{\mathbf{i}}]v_{\mathbf{i},c}=g_{k-q}v_{\mathbf{i},c}+
\sum_{\mathbf{j}\in
\{\mathbf{k}\mid \mathbf{i}-\mathbf{k}\in \S,0\leq\W(\mathbf{k})\leq k-1\}}g_{\mathbf{i}-\mathbf{j}}v_{\mathbf{j},c}
\end{eqnarray}
for some $g_{k-q}\in \U(\R)_{k-q}$.

{\rm (2)} It follows from   {\rm (1)} that  we have   $\W(F_k g_{\mathbf{i}}v_{\mathbf{i},c})\leq\W(g_{\mathbf{i}}v_{\mathbf{i},c})-k+r$. The lemma clears.
\end{proof}

\begin{lemm}\label{lemm33}
 Let $m\in\Z,  c\in\C, \mathbf{i}\in\S$
  with $\widehat{k}=\mathrm{min}\{m\mid i_m\neq0\}\geq0$.   Assume  that $V$ is  a simple $\B$-module and there exists $r\in\Z_{\geq2}$ such that   $V$  satisfying  the conditions  \begin{itemize}
\item[{\rm (1)}] the action of $L_{r}$ on   $V$  is  injective;
\item[{\rm (2)}]  $L_{m}V=G_rV=0$ for all $m>r$.
\end{itemize}
Then
 \begin{itemize}
\item[{\rm (i)}] if $\widehat{k}=2k-1$ for some $k\in\Z_+$, we have
\begin{itemize}
\item[{\rm (a)}]  $\mathrm{deg}(L_{k+r}g_{\mathbf{i}}v_{\mathbf{i},c})=\mathbf{i}-\epsilon_{\widehat{k}}$;
\item[{\rm (b)}]
   $\mathbf{i}-\epsilon_{\widehat{k}}\notin\mathrm{supp}
   \big(L_{k+r}g_{\widetilde{\mathbf{i}}}v_{\widetilde{\mathbf{i}},c}\big)$ for all $\mathbf{i}\succ\widetilde{\mathbf{i}}$.
\end{itemize}
\item[{\rm (ii)}]
 if $\widehat{k}=2k$ for some $k\in\Z_+$, we have
\begin{itemize}
\item[{\rm (a)}]  $\mathrm{deg}(G_{k+r-1}g_{\mathbf{i}}v_{\mathbf{i},c})=\mathbf{i}-\epsilon_{\widehat{k}}$;
\item[{\rm (b)}]
   $\mathbf{i}-\epsilon_{\widehat{k}}\notin\mathrm{supp}
   \big(G_{k+r-1}g_{\widetilde{\mathbf{i}}}v_{\widetilde{\mathbf{i}},c}\big)$ for all $\mathbf{i}\succ\widetilde{\mathbf{i}}$.
   \end{itemize}
\end{itemize}
\end{lemm}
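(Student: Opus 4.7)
The strategy reduces the statement to a single-commutator calculation once Lemma~\ref{lemm3231} has been applied. Part~(ii) of that lemma, with $F_K=L_{k+r}$ for case~(i) (so $K=k+r$) or $F_K=G_{k+r-1}$ for case~(ii) (so $K=k+r-1$), shows that every summand of $F_Kg_{\mathbf{i}}v_{\mathbf{i},c}$ has weight $\leq\W(\mathbf{i})-K+r$; one checks directly that $\W(\mathbf{i}-\epsilon_{\widehat{k}})$ equals this bound. Hence any degree at or above $\mathbf{i}-\epsilon_{\widehat{k}}$ lies in the dominant-weight stratum, which by part~(i) of Lemma~\ref{lemm3231} has support contained in $\{\mathbf{i}-\mathbf{j}:\W(\mathbf{j})=K-r\}$. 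Inside that stratum, $\mathbf{D}$-comparison eliminates every multi-basis $\mathbf{j}$: such a $\mathbf{j}$ yields $\mathbf{D}(\mathbf{i}-\mathbf{j})\leq\mathbf{D}(\mathbf{i})-2<\mathbf{D}(\mathbf{i})-1=\mathbf{D}(\mathbf{i}-\epsilon_{\widehat{k}})$, hence strictly smaller principal order. Only the single-basis options $\mathbf{j}=\epsilon_m$ remain.

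For case~(i), the weight-$k$ singles are $\epsilon_{2k-1}$ and $\epsilon_{2k+2}$, the latter admissible only when $i_{2k+2}=1$. Under reverse-lexicographic comparison at the first differing position $2k-1$, the candidate $\mathbf{i}-\epsilon_{2k+2}$ actually sits \emph{above} $\mathbf{i}-\epsilon_{2k-1}$ in the principal order. The pivotal step is therefore to show that the PBW coefficient of $g_{\mathbf{i}-\epsilon_{2k+2}}$ in $L_{k+r}g_{\mathbf{i}}$ annihilates $v_{\mathbf{i},c}$. Arriving at the monomial $g_{\mathbf{i}-\epsilon_{2k+2}}$ at dominant weight is possible only through the single commutator $[L_{k+r},G_{-k}]=-\tfrac{3k+r}{2}G_r$, since any further super-commutator used to transport the resulting $G_r$ rightward would perturb the monomial. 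The surviving coefficient is $\pm\tfrac{3k+r}{2}G_r$, which annihilates $v_{\mathbf{i},c}$ by the hypothesis $G_rV=0$. The same accounting applied to $[L_{k+r},L_{-k}]=-(2k+r)L_r$, with combinatorial factor $i_{2k-1}$ counting available copies of $L_{-k}$, yields leading coefficient $-i_{2k-1}(2k+r)L_rv_{\mathbf{i},c}$ at $\mathbf{i}-\epsilon_{2k-1}$, nonzero by injectivity of $L_r$. This establishes~(i)(a).

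Case~(ii) runs in parallel: the only admissible weight-$(k-1)$ single is $\epsilon_{2k}$ (the candidate $\epsilon_{2k-3}$ fails because $i_{2k-3}=0$), and the super-bracket $\{G_{k+r-1},G_{-k+1}\}=-2L_r$ replaces $[L_{k+r},L_{-k}]$ to produce the nonzero leading coefficient $\pm 2L_rv_{\mathbf{i},c}$. For part~(b), if $\W(\widetilde{\mathbf{i}})<\W(\mathbf{i})$ then Lemma~\ref{lemm3231}(ii) puts $\mathbf{i}-\epsilon_{\widehat{k}}$ out of reach on weight grounds. If $\W(\widetilde{\mathbf{i}})=\W(\mathbf{i})$, then $\mathbf{i}\succ\widetilde{\mathbf{i}}$ forces $\mathbf{D}(\widetilde{\mathbf{i}})\leq\mathbf{D}(\mathbf{i})$; this constrains any $\mathbf{j}$ satisfying $\widetilde{\mathbf{i}}-\mathbf{j}=\mathbf{i}-\epsilon_{\widehat{k}}$ with $\W(\mathbf{j})=K-r$ to have $\mathbf{D}(\mathbf{j})\leq 1$. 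The tautological option $\mathbf{j}=\epsilon_{\widehat{k}}$ returns $\widetilde{\mathbf{i}}=\mathbf{i}$, forbidden; the only non-tautological single returns the same vanishing $G_r$ coefficient treated above.

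The main obstacle is justifying that pure transport of the replacement operator is the unique contribution to the PBW coefficient of $g_{\mathbf{i}-\epsilon_{\widehat{k}}}$. The cleanest route is induction on $\mathbf{D}(\mathbf{i})$ (equivalently, on the number of PBW factors of $g_{\mathbf{i}}$): each additional super-commutator invoked during the rearrangement strictly lowers $\mathbf{D}$ of the resulting monomial, so no higher-order commutator can reenter the $\mathbf{D}(\mathbf{i})-1$ stratum in which $\mathbf{i}-\epsilon_{\widehat{k}}$ lives. Beyond this and the standard sign bookkeeping forced by the super-bracket in case~(ii), the remainder is routine commutator algebra driven by the defining relations of $\R$.
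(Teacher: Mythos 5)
Your proposal is correct and follows essentially the same route as the paper's proof: the weight bound and top-stratum support description from Lemma \ref{lemm3231}, the observation that the only competing same-weight, same-$\mathbf{D}$ candidates come from single commutators whose coefficients are multiples of $G_rv_{\mathbf{i},c}=0$ (e.g.\ $[L_{k+r},G_{-k}]\propto G_r$), the nonvanishing leading coefficient $\propto L_rv_{\mathbf{i},c}$ via injectivity of $L_r$, and the $\W$/$\mathbf{D}$/reverse-lexicographic comparisons for part (b). You simply spell out more explicitly some points the paper states tersely (the competitor $\mathbf{i}-\epsilon_{2k+2}$, the explicit bracket coefficients, and the chain-length/$\mathbf{D}$ argument behind ``the only way'' claims), which is consistent with, not divergent from, the paper's argument.
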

\begin{proof}
{\rm (i)} {\rm (a)}
To prove this, we   write  $L_{k+r}g_{\mathbf{i}}v_{\mathbf{i},c}$ as  \eqref{113.1222}. Clearly,     the only way to give
  $g_{\mathbf{i}-\epsilon_{\widehat{k}}}v_{\mathbf{i},c}$ is to commute $L_{r+k}$ with an  $L_{-k}$, which implies $\mathbf{i}-\epsilon_{\widehat{k}}\in\mathrm{supp}
   \big(L_{k+r}g_{\mathbf{i}}v_{\mathbf{i},c}\big)$.
If there exists  a   $G_{-k}$ in $g_{\mathbf{i}}$, we obtain
$[L_{r+k},G_{-k}]v_{\mathbf{i},c}=0.$
Then by Lemma \ref{lemm3231}, we conclude $\mathrm{deg}(L_{k+r}g_{\mathbf{i}}v_{\mathbf{i},c})=\mathbf{i}-\epsilon_{\widehat{k}}$.

{\rm (b)}
Now we  consider the following three cases.

 First consider  $\W(\widetilde{\mathbf{i}}) < \W(\mathbf{i})$. According to  Lemma \ref{lemm3231},  we have
$$\W(L_{k+r}g_{\widetilde{\mathbf{i}}}v_{\widetilde{\mathbf{i}},c})
\leq \W(\widetilde{\mathbf{i}})-k<\W(\mathbf{i}-\epsilon_{\widehat{k}})=\W(\mathbf{i})-k.$$
Obviously, {\rm (b)} follows in this case.

Assume $\W(\widetilde{\mathbf{i}})= \W(\mathbf{i})=p\in\Z$ and $\mathbf{D}(\widetilde{\mathbf{i}})< \mathbf{D}(\mathbf{i})$.
  If the element
$\mathbf{j}\in\mathrm{supp}
   \big(L_{k+r}g_{\widetilde{\mathbf{i}}}v_{\widetilde{\mathbf{i}},c}\big)$ is such that $\mathbf{D}(\mathbf{j})< \mathbf{D}(\widetilde{\mathbf{i}})$, then
$$\mathbf{D}(\mathbf{j})< \mathbf{D}(\widetilde{\mathbf{i}})\leq \mathbf{D}(\mathbf{i})-1=\mathbf{D}(\mathbf{i}-\epsilon_{\widehat{k}}).$$
This  shows  $\mathbf{j}\neq  \mathbf{i}-\epsilon_{\widehat{k}}$.
If  the element $\mathbf{j}\in\mathrm{supp}
   \big(L_{k+r}g_{\widetilde{\mathbf{i}}}v_{\widetilde{\mathbf{i}},c}\big)$  is such that $\mathbf{D}(\mathbf{j})=\mathbf{D}(\widetilde{\mathbf{i}})$, then
such $\mathbf{j}$ can only be given by commuting $L_{k+r}$ with some $L_{-j}$, where $j>k+r$.
Then we check
$$\W(\mathbf{j})=\W(\widetilde{\mathbf{i}})-k-r < \W\widetilde{(\mathbf{i}})-k= \W(\mathbf{i})-k=\W(\mathbf{i}-\epsilon_{\widehat{k}}),$$
 which   implies that  $\mathbf{j}\neq \mathbf{i}-\epsilon_{\widehat{k}}$. So,
{\rm (b)} also follows in this case.
 Let $\widetilde{k}=\mathrm{min}\{k\mid \widetilde{i}_k\neq0\}$ be in $\widetilde{\mathbf{i}}$.
  If $\widetilde{k}=\widehat{k}$, then by {\rm (1)}, $\mathrm{deg}(L_{k+r}g_{\widetilde{\mathbf{i}}}v_{\widetilde{\mathbf{i}},c})
  =\widetilde{\mathbf{i}}-\epsilon_{\widehat{k}}$, we also have {\rm (b)} in this case.

Consider the last case $\W(\widetilde{\mathbf{i}})=\W(\mathbf{i})= p$,  $\mathbf{D}(\widetilde{\mathbf{i}})=\mathbf{D}(\mathbf{i})$ and $\widetilde{k}>\widehat{k}$.
Then from Lemma \ref{lemm3231}, we have $\W(L_{k+r}g_{\widetilde{\mathbf{i}}}v_{\widetilde{\mathbf{i}},c})<p-k
=\W(\mathbf{i}-\epsilon_{\widehat{k}})$.
We complete  the proof of (i).

{\rm (ii)}
{\rm (a)} Write  $G_{k-1+r}g_{\mathbf{i}}v_{\mathbf{i},c}$ as the form of \eqref{113.1222}. We know  that  the only way to obtain
  $g_{\mathbf{i}-\epsilon_{\widehat{k}}}v_{\mathbf{i},c}$ is to commute $G_{k-1+r}$ with a  $G_{-k+1}$, which gives $\mathbf{i}-\epsilon_{\widehat{k}}\in\mathrm{supp}
   \big(G_{k-1+r}g_{\mathbf{i}}v_{\mathbf{i},c}\big)$.
Combining this with Lemma \ref{lemm3231}, we deduce   $\mathrm{deg}(G_{k-1+r}g_{\mathbf{i}}v_{\mathbf{i},c})=\mathbf{i}-\epsilon_{\widehat{k}}$.

{\rm (b)} Now we have the following three cases.

First consider   $\W(\widetilde{\mathbf{i}}) < \W(\mathbf{i})$. It follows from   Lemma \ref{lemm3231} that   we get
$$\W(G_{k-1+r}g_{\widetilde{\mathbf{i}}}v_{\widetilde{\mathbf{i}},c})
\leq \W(\widetilde{\mathbf{i}})-k+1<\W(\mathbf{i}-\epsilon_{\widehat{k}})
=\W(\mathbf{i})-k+1.$$
Thus, {\rm (b)} holds in this case.

Consider $\W(\widetilde{\mathbf{i}})= \W(\mathbf{i})=p\in\Z$ and $\mathbf{D}(\widetilde{\mathbf{i}})< \mathbf{D}(\mathbf{i})$.
  If  there exists
$\mathbf{j}\in\mathrm{supp}
   \big(G_{k-1+r}g_{\widetilde{\mathbf{i}}}v_{\widetilde{\mathbf{i}},c}\big)$  such that $\mathbf{D}(\mathbf{j})< \mathbf{D}(\widetilde{\mathbf{i}})$, then we have
$$\mathbf{D}(\mathbf{j})< \mathbf{D}(\widetilde{\mathbf{i}})\leq \mathbf{D}(\mathbf{i})-1=\mathbf{D}(\mathbf{i}-\epsilon_{\widehat{k}}).$$
This implies $\mathbf{j}\neq  \mathbf{i}-\epsilon_{\widehat{k}}$.
If there exists $\mathbf{j}\in\mathrm{supp}
   \big(G_{k-1+r}g_{\widetilde{\mathbf{i}}}v_{\widetilde{\mathbf{i}},c}\big)$    such that $\mathbf{D}(\mathbf{j})=\mathbf{D}(\widetilde{\mathbf{i}})$, then
such $\mathbf{j}$ can only be given by commuting $G_{k-1+r}$ with some $G_{-p}$, where $p>k-1+r$.
So we obtain
$$\W(\mathbf{j})=\W(\widetilde{\mathbf{i}})-k+1-r < \W(\widetilde{\mathbf{i}})-k+1= \W(\mathbf{i})-k+1=\W(\mathbf{i}-\epsilon_{\widehat{k}}),$$
 which   gives $\mathbf{j}\neq \mathbf{i}-\epsilon_{\widehat{k}}$. Obviously,
{\rm (ii)} also holds in this case.
We denote by $\widetilde{k}=\mathrm{min}\{k\mid \widetilde{i}_k\neq0\}$ for $\widetilde{\mathbf{i}}$.
  If $\widetilde{k}=\widehat{k}$, then by {\rm (1)}, $\mathrm{deg}(G_{k-1+r}g_{\widetilde{\mathbf{i}}}v_{\widetilde{\mathbf{i}},c})
  =\widetilde{\mathbf{i}}-\epsilon_{\widehat{k}}$, we also have {\rm (ii)} in this case.

At last, we   suppose   $\W(\widetilde{\mathbf{i}})=\W(\mathbf{i})= p\in\Z$,  $\mathbf{D}(\widetilde{\mathbf{i}})=\mathbf{D}(\mathbf{i})$ and $\widetilde{k}>\widehat{k}$.
Then it follows from   Lemma \ref{lemm3231} that  we get $\W(G_{k-1+r}g_{\widetilde{\mathbf{i}}}v_{\widetilde{\mathbf{i}},c})<p-k+1
=\W(\mathbf{i}-\epsilon_{\widehat{k}})$.
In conclusion, (b) holds.
\end{proof}
Now we present the   main result of this section.
\begin{theo}\label{th1}
 Let $m\in\Z, c\in\C$. Let
 $V$  be  a simple $\B$-module and  there exists $r\in\Z_{\geq2}$ such that $V$ satisfying  the following two conditions
  \begin{itemize}
\item[{\rm (1)}] the action of $L_{r}$ on   $V$  is  injective;
\item[{\rm (2)}]  $L_{m}V=G_rV=0$ for all $m>r$.
\end{itemize} Then we obtain that
   $\R$-module $\mathrm{Ind}_{\B,c}(V)$ is   simple.
\end{theo}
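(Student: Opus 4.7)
The plan is to take any nonzero $\R$-submodule $M$ of $\mathrm{Ind}_{\B,c}(V)$ and prove $M=\mathrm{Ind}_{\B,c}(V)$ by descent on the principal total order $\succ$. The descent is well-founded on bounded pieces of $\S$: for every fixed $p\in\N$ the set $\{\mathbf{i}\in\S\mid \W(\mathbf{i})=p\}$ is finite (partition-type combinatorics, since $\W(\mathbf{i})=\sum_{k\geq 1}(k\,i_{2k-1}+(k-1)\,i_{2k})$ together with $i_{2k}\in\X=\{0,1\}$ bounds each coordinate within a fixed $\W$-value). Hence for any nonzero $u\in M$ the set of $\mathrm{deg}(w)$ realized by nonzero $w\in M$ with $\mathrm{deg}(w)\preceq \mathrm{deg}(u)$ is finite, so it admits a $\succ$-minimum. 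Pick $0\neq w_0\in M$ with $\mathbf{i}_0:=\mathrm{deg}(w_0)$ minimal.

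Assume for contradiction $\mathbf{i}_0\neq\mathbf{0}$, and let $\widehat{k}=\min\{m\in\Z_+\mid (i_0)_m\neq 0\}$. According to the parity of $\widehat{k}$ I set
\[
F:=\begin{cases} L_{k+r}, & \widehat{k}=2k-1,\\ G_{k-1+r}, & \widehat{k}=2k, \end{cases}
\]
for the appropriate $k\geq 1$, and write $w_0=g_{\mathbf{i}_0}v_{\mathbf{i}_0,c}+\sum_{\widetilde{\mathbf{i}}\in\mathrm{supp}(w_0),\,\widetilde{\mathbf{i}}\prec\mathbf{i}_0}g_{\widetilde{\mathbf{i}}}v_{\widetilde{\mathbf{i}},c}$. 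Lemma \ref{lemm33}(a) identifies $\mathbf{i}_0-\epsilon_{\widehat{k}}$ as the principal degree of $Fg_{\mathbf{i}_0}v_{\mathbf{i}_0,c}$, with coefficient a nonzero vector of $V$ (via injectivity of $L_r$), and Lemma \ref{lemm33}(b) excludes $\mathbf{i}_0-\epsilon_{\widehat{k}}$ from $\mathrm{supp}(Fg_{\widetilde{\mathbf{i}}}v_{\widetilde{\mathbf{i}},c})$ for every $\widetilde{\mathbf{i}}\prec\mathbf{i}_0$; thus the $g_{\mathbf{i}_0-\epsilon_{\widehat{k}}}$-coefficient of $Fw_0$ is nonzero, so $Fw_0\neq 0$. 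For the upper bound I apply Lemma \ref{lemm3231}(ii) termwise to obtain a uniform $\W$-estimate on each $Fg_{\widetilde{\mathbf{i}}}v_{\widetilde{\mathbf{i}},c}$, and fall back on the $\mathbf{D}$- and reverse-lex comparisons carried out in the three subcases of Lemma \ref{lemm33}(b) whenever the $\W$-bound fails to be strict; together these force every element of $\mathrm{supp}(Fw_0)$ to lie strictly below $\mathbf{i}_0$ in the principal order. Hence $0\neq Fw_0\in M$ with $\mathrm{deg}(Fw_0)\prec\mathbf{i}_0$, contradicting the minimality of $\mathbf{i}_0$.

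Therefore $\mathbf{i}_0=\mathbf{0}$, i.e.\ $w_0\in V\setminus\{0\}$. Since $V$ sits as a simple $\B$-submodule of $\mathrm{Ind}_{\B,c}(V)$, simplicity gives $\U(\B)w_0=V\subseteq M$, and the PBW basis (combined with $G_{-m}^2=-L_{-2m}$) then yields $\mathrm{Ind}_{\B,c}(V)=\U(\R)V\subseteq M$, finishing the proof. The hard part I foresee is the strict descent $\mathrm{deg}(Fw_0)\prec\mathbf{i}_0$ precisely when $\widehat{k}=2$: here $F=G_r$ annihilates $V$ and the $\W$-estimate of Lemma \ref{lemm3231} is no longer strict, so the drop must be extracted from either $\mathbf{D}$ or $\W$ through the bracket combinatorics (every surviving term in $Fw_0$ must absorb $G_r$ via at least one super-bracket with $g_{\widetilde{\mathbf{i}}}$). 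This borderline case is exactly what the subcases of Lemma \ref{lemm33}(ii)(b) are arranged to settle, so the theorem itself amounts to assembling the preceding lemmas in the right order.
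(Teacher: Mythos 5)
Your proposal is correct and follows essentially the same route as the paper: it rests on Lemma \ref{lemm3231} and Lemma \ref{lemm33} to produce, from any nonzero element of $\mathrm{Ind}_{\B,c}(V)$ of degree $\mathbf{i}$ with lowest nonzero entry $\widehat{k}$, a nonzero element of strictly smaller principal degree via $L_{k+r}$ (if $\widehat{k}=2k-1$) or $G_{k+r-1}$ (if $\widehat{k}=2k$), descending to $1\otimes V$ and then using simplicity of $V$ together with $\U(\R)V=\mathrm{Ind}_{\B,c}(V)$. Your repackaging (minimal-degree element of a submodule, explicit well-foundedness of $\succ$ on bounded $\W$-levels, and the flagged borderline case $F=G_r$ where the strict drop comes from $\mathbf{D}$ rather than $\W$) only makes explicit what the paper's proof of Theorem \ref{th1} leaves implicit.
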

 \begin{proof}
 Let $0\neq w\in\mathrm{Ind}_{\B,c}(V)$ and $\mathrm{deg}(w)=\mathbf{i}$ for $\mathbf{i}\in\S$. Write  $\widehat{k}=\mathrm{min}\{m\mid i_m\neq0\}\geq0$. Based on Lemma \ref{lemm33}, we immediately
get the following results. If $\widehat{k}=2k-1$ for some $k\in\Z_+$, then   $L_{r+k}w\neq0$.
    If $\widehat{k}=2k$ for some $k\in\Z_+$, then $G_{r+k-1}w\neq0$. Therefore, from any  $0\neq w\in\mathrm{Ind}_{\B,c}(V)$ we always get a nonzero element in
 $\U(\R )w\cap V\neq0$, which shows the simplicity of $\mathrm{Ind}_{\B,c}(V)$. This completes  the proof.
\end{proof}

\section{Characterization of simple restricted $\R$-modules}
  For $m\in\Z, r\in\Z_{\geq2}$,
we denote   $$\R^{(r)}=\bigoplus_{m> r}\C L_{m}\oplus\bigoplus_{m> r-1}\C G_{m}.$$
First,  several equivalent conditions of simple
restricted modules over $\R$ are shown.
\begin{prop}\label{th22}
 Let    $\P$ be a simple   module for $\R$.  Then the following conditions are equivalent:
\begin{itemize}
\item[{\rm (i)}] $\P$ is a restricted $\R$-module.

\item[{\rm (ii)}]   There exists $r\in\Z_{\geq2}$ such that the actions of $L_{m},G_n$ for $m>r, n>r-1$ on $\P$ are locally nilpotent.

\item[{\rm (iii)}]
There exists $r\in\Z_{\geq2}$ such that the actions of $L_{m},G_n$ for   $m>r, n>r-1$ on $\P$ are locally finite.

\item[{\rm (iv)}] There exists $r\in\Z_{\geq2}$ such that  $\P$ is a  locally nilpotent $\R^{(r)}$-module.

\item[{\rm (v)}]       There exists $r\in\Z_{\geq2}$ such that  $\P$ is a  locally finite $\R^{(r)}$-module.
\end{itemize}
\end{prop}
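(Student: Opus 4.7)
The plan is to cycle through the five conditions, reducing the work to a few substantive arrows after the trivial implications are handled. The implications $(iv)\Rightarrow(ii)$, $(iv)\Rightarrow(v)$, $(v)\Rightarrow(iii)$, and $(ii)\Rightarrow(iii)$ are immediate from the definitions: local nilpotence of the Lie subalgebra $\R^{(r)}$ specializes to local nilpotence of each individual $L_m$ or $G_n$ in it, and local nilpotence of an operator trivially implies local finiteness of the same. It therefore suffices to establish $(i)\Rightarrow(iv)$ and $(iii)\Rightarrow(i)$; all five conditions are then equivalent via the resulting cycle.

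For $(i)\Rightarrow(iv)$, I would fix a nonzero $v_0\in\P$ and, invoking restrictedness, choose $r\in\Z_{\geq 2}$ large enough that $\R^{(r)} v_0=0$ (i.e.\ $L_m v_0=0$ for $m>r$ and $G_n v_0=0$ for $n\geq r$). Define
\[
\P'=\{v\in\P:(\R^{(r)})^{N}v=0\text{ for some }N\in\Z_+\};
\]
then $v_0\in\P'$, so $\P'\neq 0$. The main task is to show $\P'$ is an $\R$-submodule, after which simplicity of $\P$ gives $\P'=\P$, which is exactly (iv). For closure, given $v\in\P'$ with $(\R^{(r)})^{N}v=0$ and homogeneous $x\in\R$ of degree $d$, pushing the $y_i\in\R^{(r)}$ past $x$ via $y_ix=xy_i+[y_i,x]$ turns $y_1\cdots y_{N'}(xv)$ into $x(y_1\cdots y_{N'}v)$ plus commutator tails, where each $[y_i,x]$ has degree $\deg y_i+d\geq r+d$; when $d\geq 0$ these lie back in $\R^{(r)}$ and are absorbed at once, and when $d<0$ enlarging $N'$ by $|d|$ additional layers suffices for the shifted commutators to re-enter $\R^{(r)}$. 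An induction on the PBW-length of an $x\in\U(\R)$ with $v=xv_0$ (available because $\P=\U(\R)v_0$) then shows $xv\in\P'$ for every $x$.

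For $(iii)\Rightarrow(i)$, I would first observe that $\R^{(r)}$ is finitely generated as a Lie superalgebra: via $[L_a,L_b]=(b-a)L_{a+b}$, $[L_a,G_b]=(b-a/2)G_{a+b}$, and $G_n^2=-L_{2n}$, the elements $L_{r+1},L_{r+2}$ together with $G_r,G_{r+1},\ldots,G_{2r}$ generate all of $\R^{(r)}$. Local finiteness of each generator then forces $W_v:=\U(\R^{(r)})v$ to be finite-dimensional for each $v\in\P$ (which is $(iii)\Rightarrow(v)$ en route). In the induced finite-dimensional representation $\rho\colon\R^{(r)}\to\mathfrak{gl}(W_v)$, the infinite family $\{\rho(L_m)\}_{m>r}$ must collapse into a finite-dimensional subspace, so linear dependences exist; bracketing such dependences repeatedly with $\rho(L_{r+1})$ propagates them through the recurrence $\rho(L_{a+b})=\tfrac{1}{b-a}[\rho(L_a),\rho(L_b)]$ until every $\rho(L_m)$ with $m$ past some threshold is forced to zero, and an analogous argument using $\rho(L_{2n})=-\rho(G_n)^2$ handles the $G_n$'s. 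Consequently $L_m v=G_n v=0$ for $m,n$ sufficiently large, which is (i).

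The principal obstacle is the closure of $\P'$ in the first step: commutators $[y_i,x]$ with negative-degree $x$ can drop below the cutoff of $\R^{(r)}$, and a careful induction (on PBW length, or on the total degree deficit $|d|$) is needed to verify that every such excursion is reabsorbed by subsequent $\R^{(r)}$-brackets before it has a chance to produce a persistent nonzero tail acting on $v_0$.
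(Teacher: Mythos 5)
Your reduction to the two arrows $(\mathrm{i})\Rightarrow(\mathrm{iv})$ and $(\mathrm{iii})\Rightarrow(\mathrm{i})$ matches the paper, and your treatment of $(\mathrm{i})\Rightarrow(\mathrm{iv})$ (the subspace $\P'$ of $\R^{(r)}$-locally-nilpotent vectors, closure under $\R$, then simplicity) is a workable variant of the paper's argument, modulo details you yourself flag (note also the edge case $d=0$: $[G_r,G_0]=-2L_r\notin\R^{(r)}$, so even nonnegative-degree commutators need the recursive absorption argument). The genuine gap is in $(\mathrm{iii})\Rightarrow(\mathrm{i})$, at the step ``local finiteness of each generator then forces $W_v:=\U(\R^{(r)})v$ to be finite-dimensional.'' That is not a valid general principle: local finiteness of finitely many generating elements of a Lie (super)algebra does not imply local finiteness of the whole algebra on a module, and proving this finiteness here is precisely the hard content of the implication. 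The paper does not deduce it from finite generation; it first picks an eigenvector $w$ of $L_{r+1}$ (available by local finiteness of that single operator), then uses the recurrences $L_{m+i(r+1)}w\in\U(\C L_{r+1})L_m w$ and $G_{n+i(r+1)}w\in\U(\C L_{r+1})G_n w$ to show each $\sum_{i}\C L_{r+1+i}w$ and $\sum_i\C G_{r+i}w$ is finite dimensional, and only then builds the finite-dimensional space $M'$ on which the minimal-relation argument is run. Without an argument of this kind (or some substitute using simplicity), your chain collapses at its first substantive step.

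Two further points in the same direction. First, your claimed generating set $\{L_{r+1},L_{r+2},G_r,\dots,G_{2r}\}$ does not generate $\R^{(r)}$: any bracket of two elements of $\R^{(r)}$ has degree at least $2r$, so $L_{m}$ for $r+2<m<2r$ (nonempty once $r\geq4$) cannot be produced; one needs all $L_{r+1},\dots,L_{2r+2}$ (the finite generation claim itself is true, but this does not rescue the step above). Second, your final deduction for the odd part is flawed: from $\rho(L_{2n})=-\rho(G_n)^2=0$ you can only conclude that $\rho(G_n)$ is nilpotent, not zero, so this does not yield a vector annihilated by the $G_n$ tail. The paper instead gets $G_{m+l}M'=\frac{2}{2l-m}\bigl(L_mG_l-G_lL_m\bigr)M'=0$ once $L_mM'=0$, using the bracket $[L_m,G_l]$ rather than the square; you would need to argue this way (or otherwise) to finish. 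As written, $(\mathrm{iii})\Rightarrow(\mathrm{i})$ is not established.
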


\begin{proof}

It is clear that $(\mathrm{iv})\Rightarrow(\mathrm{v})\Rightarrow(\mathrm{iii})$ and  $(\mathrm{iv})\Rightarrow(\mathrm{ii})\Rightarrow(\mathrm{iii})$.
So, we only need to prove $(\mathrm{i})\Rightarrow(\mathrm{iv})$ and
$(\mathrm{iii})\Rightarrow(\mathrm{i})$.

$(\mathrm{i})\Rightarrow(\mathrm{iv})$.
From the definition of restricted module, for any nonzero element $v\in\P$, there exists
$r^\prime\in\Z_+$ such that $L_{m}v=G_nv=0$ for $m>r^\prime,n>r^\prime-1$. By the simplicity of $\P$, we have  $\P=\U(\R)v$. Then from the $\mathrm{PBW}$ Theorem,  we check that $\P$ is a locally
nilpotent  module over $\R^{(r)}$ for $r>r^\prime$.

$(\mathrm{iii})\Rightarrow(\mathrm{i})$.
Since there exists $r\in\Z_{\geq2}$ such that the actions of
$L_{m}$ and $G_{n}$   for all $m>r, n>r-1$ on $\P$ are locally finite,
then we can choose a nonzero element $w\in \P$ such that $L_{r+1}w=\mu w$ for some $\mu\in \C$.

Let  $m,n\in\Z,r\in\Z_{\geq2}$ with $m>r,n>r-1$. We  denote
\begin{eqnarray*}
 &&\M_L=\sum_{i\in\N}\C L_{r+1}^iL_{m}w=\U(\C L_{r+1})L_{m}w
 \quad \mathrm{and}\quad \M_G=\sum_{i\in\N}\C L_{r+1}^iG_{n}w=\U(\C L_{r+1})G_{n}w.
\end{eqnarray*}
 It follows from the definition of $\R$ and  $i\in\N$ that  we get
\begin{eqnarray*}
&&L_{m+i(r+1)}w\in \M_L\Rightarrow L_{m+(i+1)(r+1)}w\in \M_L,\quad \forall \ m>r,
\\&&G_{n+i(r+1)}w\in \M_G\Rightarrow G_{n+(i+1)(r+1)}w\in \M_G,\quad \forall \ n>r-1.
\end{eqnarray*}
Then we have
  $L_{m+i(r+1)}w\in \M_L$ and $G_{n+i(r+1)}w\in \M_G$ for all $m>r, n>r-1$ by  induction on $i\in\N$.
In particular,  $\sum_{i\in\N}\C L_{m+i(r+1)}w$ and $\sum_{i\in\N}\C G_{n+i(r+1)}w$ are both
finite-dimensional for    $m>r, n>r-1$. Thus,
 \begin{eqnarray*}
&&\sum_{i\in\N}\C L_{r+1+i}w=\C L_{r+1}w+\sum_{m=r+2}^{2r+2}\big(\sum_{i\in\N}\C L_{m+i(r+1)}w\big),
\\&&\sum_{i\in\N}\C G_{r+i}w=\C G_{r}w+\sum_{n=r+1}^{2r+1}\big(\sum_{i\in\N}\C G_{n+i(r+1)}w\big)
\end{eqnarray*}
 are both finite dimensional. Then  we can safely choose $t\in\N$ such that
$$
\sum_{i\in\N}\C L_{r+1+i}w=\sum_{i=0}^{t}\C L_{r+1+i}w
\quad \mathrm{and} \quad \sum_{i\in\N}\C G_{r+i}w=\sum_{i=0}^{t}\C G_{r+i}w.
$$
We denote  $$M^\prime=\sum_{x_1,\ldots,x_t\in\N,y_0,y_1,\ldots,y_t\in\X}\C L_{r+1}^{x_1}\cdots
 L_{r+t}^{x_t} G_{r}^{y_0}G_{r+1}^{y_1}\cdots
 G_{r+t}^{y_t}  w,$$
which is a (finite-dimensional) $\R^{(r)}$-module by $(\mathrm{i})$.

It follows that we can  take
 a minimal $q\in\N$ such that
 \begin{equation}\label{lm3.3}
 (L_m+\alpha_1L_{m+1}+\cdots + \alpha_{q}L_{m+q})M^\prime=0
 \end{equation}
 for some $m\in\Z$ with $m> r$ and  $\alpha_i\in \C,i=1,\ldots,q$.
By applying $L_{m}$ to \eqref{lm3.3}, we immediately get
$$(\alpha_1[L_{m},L_{m+1}]+\cdots +\alpha_{q}[L_{m},L_{m+q}])M^\prime=0.$$
This implies $q=0$, i.e., $L_mM^\prime=0$  for some $m> r$.
Thus, for any $k>m$, one checks
 \begin{eqnarray*}
 &&L_{m+k}M^\prime=\frac{1}{m-k}\big(L_kL_m-L_mL_k\big)M^\prime=0.
 \end{eqnarray*}
Choosing any $l>\mathrm{max}\{\frac{m}{2},r-1\}$, we  obtain
\begin{eqnarray*}
 &&G_{m+l}M^\prime=\frac{2}{2l-m}\big( L_mG_{l}-G_{l}L_{m}\big)M^\prime=0.
 \end{eqnarray*}
Therefore,  there exists a nonzero element  $u\in\P$ such that $L_ku=G_ku=0$  for all
$k>2m$. Note that $\P=\U(\R)u$. By the $\mathrm{PBW}$ Theorem,  we conclude that
each element of $\P$ can be written as a linear combinations of vectors
$$\cdots G_{2m-1}^{l_{2m-1}}G_{2m}^{l_{2m}}\cdots L_{2m-1}^{j_{2m-1}}L_{2m}^{j_{2m}}u.$$
Then for any $u^\prime \in \P$, there exists sufficiently large $s\in\Z_+$ such that $L_nu^\prime=G_nu^\prime=0$ for any $n>s$, which shows that $\P$ is a restricted $\R$-module. Then (i) holds. We complete the proof.
\end{proof}

\begin{theo}\label{th2}
 Let $c\in\C$ and  $\P$ be a simple restricted  module for $\R$. Assume that there exists $k\in\Z_{\geq2}$ such that the action of $L_{k}$ on   $\P$  is  injective.
 \begin{itemize}
\item[{\rm (1)}] Then  there
exists the smallest $b\in\Z_{\geq2}$ with $b\geq k$ such that
 $$
 \M_{b}=\Big\{w\in \P\mid L_{m}w=G_{n}w=0 \quad \mbox{for\ all}\ m>b,n> b-1\Big\}\neq0.$$ In particular, $V:=\M_{b}$ is a $\B$-module.
 \item[{\rm (2)}] More important,   $\P$ can be  described by $V$ as follows. \begin{itemize}
\item[{\rm (i)}] If $b=k$, then $V$ is a simple $ \mathcal{B}$-module, and
$\P\cong\mathrm{Ind}_{\B,c}(V)$.

\item[{\rm (ii)}]   If $b>k$, suppose that $L_b$ acts injectively on $V$.  Then $V$ is a simple $ \mathcal{B}$-module, and
$\P\cong\mathrm{Ind}_{\B,c}(V)$.
\end{itemize}
   \end{itemize}
\end{theo}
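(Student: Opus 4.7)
The plan is to prove part (1) by combining restrictedness of $\P$ with the injectivity of $L_k$, and then to prove part (2) by constructing the canonical map $\phi:\mathrm{Ind}_{\B,c}(V)\to\P$ and showing it is an isomorphism, from which simplicity of $V$ follows automatically.

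For part (1), since $\P$ is a simple restricted $\R$-module, the construction used in the proof of $(\mathrm{iii})\Rightarrow(\mathrm{i})$ in Proposition \ref{th22} produces a nonzero $u\in\P$ killed by $L_n$ and $G_n$ for all $n$ beyond some threshold, so $\M_{r_0}\neq 0$ for some $r_0\in\Z_{\geq 2}$. The inclusion $\M_b\subseteq\M_{b+1}$ is immediate, so $\{b\in\Z_{\geq 2}\mid\M_b\neq 0\}$ is upward-closed in $\Z_{\geq 2}$. A nonzero element of $\M_b$ with $b<k$ would be annihilated by $L_k$, contradicting injectivity of $L_k$ on $\P$; hence the smallest such $b$ satisfies $b\geq k$. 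A direct bracket check using \eqref{def55441.1} then shows that $V=\M_b$ is a $\B$-submodule of $\P$: for $v\in V$ and each generator $X\in\{L_i,G_j,L_0,C\}$ with $i,j\geq 1$, we have $L_m Xv=0$ whenever $m>b$ and $G_n Xv=0$ whenever $n>b-1$.

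For part (2), define $\phi:\mathrm{Ind}_{\B,c}(V)\to \P$ by $u\otimes v\mapsto uv$. Its image is a nonzero $\R$-submodule of $\P$, so by simplicity $\phi$ is surjective. The central task is injectivity. Observe that $V$ satisfies the hypotheses of Lemmas \ref{lemm3231} and \ref{lemm33} with $r=b$: $L_mV=0$ for $m>b$ by definition of $\M_b$, $G_bV=0$ since $b>b-1$, and $L_b$ acts injectively on $V$ --- in case (i) this is inherited from $\P$ (as $b=k$), and in case (ii) it is a hypothesis. Assume for contradiction $\ker\phi\neq 0$, and choose $0\neq w\in\ker\phi$ with $\deg(w)=\mathbf{i}$ minimal in the principal order $\succ$; this minimum exists because each graded piece $\{\mathbf{j}\in\S\mid\W(\mathbf{j})=N\}$ is finite, making $\succ$ well-founded. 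If $\mathbf{i}=\mathbf{0}$, then $w\in V\subseteq\P$ under the identification, so $\phi(w)=w\neq 0$, contradicting $w\in\ker\phi$. Otherwise set $\widehat{k}=\min\{m\mid i_m\neq 0\}$ and let $F=L_{b+k}$ if $\widehat{k}=2k-1$, or $F=G_{b+k-1}$ if $\widehat{k}=2k$. Mimicking the reduction in the proof of Theorem \ref{th1}, by Lemma \ref{lemm33} the coefficient at $\mathbf{i}-\epsilon_{\widehat{k}}$ in $Fw$ is a nonzero scalar multiple of $L_bv_{\mathbf{i},c}$ (nonzero by $L_b$-injectivity on $V$), and the combination of Lemma \ref{lemm33} with the $\W$-bound of Lemma \ref{lemm3231}(ii) forces $\deg(Fw)\prec\mathbf{i}$. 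Since $Fw\in\ker\phi$, this contradicts minimality of $\mathbf{i}$. Hence $\phi$ is an isomorphism.

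The simplicity of $V$ then follows formally: any nonzero proper $\B$-submodule $V'\subsetneq V$ would give a nonzero proper $\R$-submodule $\mathrm{Ind}_{\B,c}(V')\hookrightarrow \mathrm{Ind}_{\B,c}(V)\cong\P$, where the embedding is injective because $\U(\R)$ is free as a right $\U(\B)$-module by the PBW theorem, contradicting simplicity of $\P$. The main technical obstacle is the injectivity of $\phi$, for which $L_b$-injectivity on $V$ is essential. This is precisely why case (ii) must carry the additional hypothesis: the injectivity of $L_k$ on $\P$ does not automatically propagate to $L_b$ when $b>k$.
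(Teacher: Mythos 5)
Your proposal is correct and follows essentially the same route as the paper: the same use of Proposition \ref{th22} plus $L_k$-injectivity to locate the smallest $b$, the same canonical surjection $\mathrm{Ind}_{\B,c}(V)\to\P$, and the same minimal-degree kernel argument via Lemmas \ref{lemm3231} and \ref{lemm33} with $r=b$ (the hypotheses of which hold directly because $\M_b$ is killed by $G_n$ for $n>b-1$). You merely spell out a few points the paper leaves implicit (well-foundedness of $\succ$, the induced-module argument for simplicity of $V$) and omit the paper's auxiliary claim that $G_{s}V\neq0$ for $1\le s\le b-1$, which is not needed for the isomorphism argument.
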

\begin{proof}
(1) Note that there exists $k\in\Z_{\geq2}$ such that the action of $L_{k}$ on   $\P$  is  injective. Then from the proof of   Proposition \ref{th22},  we see that
  $$\M_{b^\prime}=\Big\{w\in \P\mid L_{m}w=G_{n}w=0 \quad \mbox{for\ all}\ m>b^\prime,n> b^\prime-1\Big\}\neq0$$ for sufficiently large $b^\prime\geq k$.
On the other hand, $\M_{{b^\prime}}=0$ for all ${b^\prime}<k$ since the action of  $L_k$ on $\P$ is injective. Thus we  can find the smallest   $b\in\Z_{\geq2}$ with $b\geq k$  such that $V:=\M_b\neq0$.

It follows from  $m>b,n>b-1,  p\in\N, q\in\Z_+$
that we  have
\begin{eqnarray*}
&&L_{m}(L_{p}w)=(p-m)L_{m+p}w=0,
\
G_{n}(L_{p}w)=
(\frac{p}{2}-n)G_{n+p}w=0,
\\&&L_{m}(G_{q}w)=(q-\frac{m}{2})G_{m+q}w=0,
\
G_{n}(G_{q}w)=
-2L_{n+q}w=0.
\end{eqnarray*}
 This shows that $L_pw,G_{q}w\in V$ for all $p\in\N, q\in\Z_+$.
   So, $V$ is a  $\mathcal{B}$-module.

(2) First, consider (i).     Clearly,   the action of $L_b$    on $V$ is injective.
\begin{clai}
For $s_1,s_2\in\{1,\ldots,b-1\}$ with $s_1+s_2=b$, we have $G_{s_1}V\neq0$ and $G_{s_2}V\neq0$.
\end{clai}
Suppose $G_{s_1}V=0$ or $G_{s_2}V=0$. For any $b\in\Z_{\geq2}$, we have  $$L_bV=-\frac{1}{2}(G_{s_1}G_{s_2}+G_{s_2}G_{s_1})V=0,$$ which shows  $V=0$. This contradicts with  $V\neq0$ in (1). The claim holds.
Then one can  see that  the elements in $V$ are not the same as
in $$\big\{w\in \P\mid L_{m}w=G_{n}w=0 \quad \mbox{for\ all}\ m>b,n> a^\prime\big\}\neq0\quad \mathrm{for}\quad a^\prime<b-1.$$

Since $\P$ is simple
and generated by $V$,  we know that  there exists a canonical surjective map
$$\Phi:\mathrm{ Ind}_{\B,c}(V) \rightarrow \P, \quad \Phi(1\otimes v)=v,\quad \forall  v\in V.$$
Hence, we only need to show that $\Phi$ is  an injective map.  Let $\mathcal{K}=\mathrm{ker}(\Phi)$. For any $v\in V$ we have $\Phi(1\otimes v)=v$, then   $\mathcal{K}\cap (1\otimes V)=0$. If	$\mathcal{K}\neq0$, we can choose $0\neq w\in \mathcal{K}\setminus (1\otimes V)$ such that $\mathrm{deg}(w)=\mathbf{i}$ is minimal possible.
Observe    that $\mathcal{K}$ is an $\R$-submodule of $\mathrm{Ind}_{\B,c}(V)$.
 Consider that $L_b$ acts injectively on $V$. Then by using  the similar method  in Lemma \ref{lemm33},  a new vector $\eta\in \mathcal{K}$  with $\mathrm{deg}(\eta)\prec\mathbf{i}$ can be obtained.  This  shows a contradiction, namely,  $\mathcal{K}=0$. Then
 $\P\cong \mathrm{Ind}_{\B,c}(V)$. By the property of induced modules, we see that
$V$ is simple as a  $\B$-module.

Using  an identical
process of  (i), we have (ii).
This completes  the proof.
\end{proof}

 \section{Weak modules   for   vertex operator
superalgebras}
 \textit{The    Neveu-Schwarz algebra} $\mathcal{N}$   is the Lie superalgebra with a basis
$\{L_m,G_{p}, C\mid m\in\Z,p\in\frac{1}{2}+\Z\}$
and the Lie super-bracket  defined by
 \begin{equation*}\label{def1.1}
\aligned
&[L_m,L_n]= (n-m)L_{m+n}+\frac{n^{3}-n}{12}\delta_{m+n,0}C,\\&
 [G_p,G_q]= -2L_{p+q}+\frac{4p^{2}-1}{12}\delta_{p+q,0}C,\\&
 [L_m,G_p]= (p-\frac{m}{2})G_{m+p},\ [\mathcal{N},C]=0
\endaligned
\end{equation*}
for $m,n\in\Z$, $p,q\in\frac{1}{2}+\Z$.
By its definition, we have the following decomposition:
$$\mathcal{N}=\mathcal{N}_{\bar 0}\oplus\mathcal{N}_{\bar 1},$$
where $\mathcal{N}_{\bar 0}=\mathrm{span}\{L_m,C\mid m\in\Z\}$, $\mathcal{N}_{\bar 1}=\mathrm{span}\{G_p\mid p\in\frac{1}{2}+\Z\}$.
Note that the $\mathcal{N}$  is isomorphic to the subalgebra of $\R$ spanned by
$\{L_{m}\mid m\in2\Z\}\cup\{G_p\mid p\in2\Z+1\}\cup\{C\}$.
It is clear that the $\mathcal{N}$  has a $\frac{1}{2}\Z$-grading by the eigenvalues of the adjoint action
of $L_0$.  Then $\mathcal{N}$ has the following triangular decomposition:
$$\mathcal{N}=\mathcal{N}_+\oplus\mathcal{N}_0\oplus\mathcal{N}_-,$$ where $\mathcal{N}_+=\mathrm{span}\{L_m,G_p\mid m,p>0\}$, $\mathcal{N}_-=\mathrm{span}\{L_m,G_p\mid m,p<0\}$ and $\mathcal{N}_0=\C\{L_0,C\}$.

Set
\begin{eqnarray}\label{LTG511}
L(z)=\sum_{m\in\Z}L_mz^{-m-2},\ G(z)=\sum_{n\in\Z}G_{n+\frac{1}{2}}z^{-n-2}.
\end{eqnarray}
By Section $4.2$ of \cite{L}, we have
\begin{eqnarray*}
&&[L(z_1),L(z_2)]=z_1^{-1}\delta(\frac{z_2}{z_1})\frac{\partial}{\partial z_2}(L(z_2))
+2z_1^{-2}\frac{\partial}{\partial z_2}(\delta(\frac{z_2}{z_1}))L(z_2)+\frac{c}{12}z_1^{-4}(\frac{\partial}{\partial z_2})^3\delta(\frac{z_2}{z_1}),
\\&&[L(z_1),G(z_2)]=z_1^{-1}\delta(\frac{z_2}{z_1})\frac{\partial}{\partial z_2}(G(z_2))
+\frac{3}{2}\big(\frac{\partial}{\partial z_2}z_1^{-1}\delta(\frac{z_2}{z_1})\big)G(z_2),
\\&&[G(z_1),G(z_2)]=2z_1^{-1}\delta(\frac{z_2}{z_1})L(z_2)
+\frac{c}{3}(\frac{\partial}{\partial z_2})^2z_1^{-1}\delta(\frac{z_2}{z_1}).
\end{eqnarray*}
\begin{defi}\rm
A {\em vertex  superalgebra}  denoted
   by   a quadruple $(V,Y,\mathbf{1},D)$  is a $\Z_2$-graded
vector space
$$V=V^{(0)}\oplus V^{(1)},$$
and equipped with a linear map
 \begin{eqnarray*}
   V&\longrightarrow& (\mathrm{End}(V))[[z,z^{-1}]]
\\ v&\longmapsto& Y(v,z)=\sum_{n\in\Z}v_nz^{-n-1}\quad(\mathrm{where}\ v_n\in\ \mathrm{End}(V)),
\end{eqnarray*}
and with a specified vector  $\mathbf{1}\in V_0$ (the {\it vacuum vector}) and an endomorphism $D$ of $V$,   such that
\begin{itemize}
\item[{\rm (1)}]
For any $u,v\in V$, $u_nv = 0 \ \mathrm{for}\ n \ \mathrm{sufficiently\ large}$;
\item[{\rm (2)}]
$[D,Y(v,z)]=Y(D(v),z)=\frac{\mathrm{d}}{\mathrm{dz}}Y(v,z)$ for any $v\in V$;
\item[{\rm (3)}]
$Y(\mathbf{1},z)=\mathrm{Id}_V$ (the {\it identity operator} of $V$);
\item[{\rm (4)}]
$Y(v,z)\mathbf{1}\in \mathrm{End}(V)[[z]] \ \mathrm{and}\ \mathrm{lim}_{z\rightarrow0} Y(v,z)\mathbf{1}=v$ for any $v\in V$;
\item[{\rm (5)}]
$z_0^{-1}\delta(\frac{z_1-z_2}{x_0})Y(u,z_1)Y(v,z_2)
-(-1)^{|u||v|}z_0^{-1}\delta(\frac{z_2-z_1}{-z_0})Y(v,z_2)Y(u,z_1))
\\=z_2^{-1}\delta(\frac{z_1-z_0}{z_2})Y(Y(u,z_0)v,z_2)$ (the {\it Jacobi  identity}),
  where $|v|=j$ if $v\in V^{(j)}$
for $j\in\Z_2$.
\end{itemize}
This completes the definition of   vertex  superalgebra.
\end{defi}
A vertex superalgebra $V$ is called a {\em vertex operator superalgebra} if there
exists  another distinguished vector $\omega$ of V satisfying the following conditions
\begin{itemize}
\item[{\rm (6)}]
$[L(m),L(n)]= (n-m)L(m+n)+\frac{n^{3}-n}{12}\delta_{m+n,0}C$ for $m,n\in\Z$, where $Y(\omega,z)=\sum_{n\in\Z}L(n)z^{-n-2}$;
\item[{\rm (7)}]
$L_{-1}=D$, i.e., $\frac{\mathrm{d}}{\mathrm{dz}}Y(v,z)=Y(L_{-1}v,z)$ for any $v\in V$;
\item[{\rm (8)}]
$V$ is $\frac{1}{2}\Z$-graded such that $V=\bigoplus_{n\in\frac{1}{2}\Z}V_{(n)}$, $L(0)\mid_{V_{(n)}}=n\mathrm{Id}_{V_{(n)}}$, $\mathrm{dim}(V_{(n)})<\infty$ and $V_{(n)}=0$ for
$n$  sufficiently negative.
\end{itemize}

For any $h,c\in\C$, assume that $W(h,c)$ is the Verma module for $\mathcal{N}$ with  highest weight $(h,c)$. Let $\mathbf{1}$ be a highest weight
vector of $W(0,c)$.
We denote
\begin{eqnarray}\label{5342}
\bar{W}(0,c)=W(0,c)/\langle G_{-\frac{1}{2}}\mathbf{1}\rangle,
\end{eqnarray}
 where  $\langle G_{-\frac{1}{2}}\mathbf{1}\rangle$
 is the submodule generated by $G_{-\frac{1}{2}}\mathbf{1}$. It is well know that $\bar{W}(0,c)$ has a natural vertex operator superalgebra
structure (see \cite{KW,L}).

Assume that $\V$ is  a vertex superalgebra. Define  the following linear map
 \begin{eqnarray*}
\psi:\quad   \V&\longrightarrow& \V
\\ a+b&\longmapsto&a-b
\end{eqnarray*}
for $a\in \V^{(0)}, b\in \V^{(1)}$. It is clear that $\psi$  is an automorphism of $\V$ (called the {\em canonical
automorphism} (see \cite{FFF})). Then $\mathrm{Aut}(\bar W(0,c))=\Z_2=\langle \psi\rangle$.

The following results can be found in \cite{L,L1}.
\begin{lemm}\label{lemm51}
 Let $c\in\C$, $\bar{W}(0,c)$ defined as \eqref{5342}.
\begin{itemize}
\item[{\rm (i)}]
  Any weak  $\bar{W}(0,c)$-module is a  naturally restricted  module for $\mathcal{N}$  with central charge $c$; and conversely, any restricted  module for $\mathcal{N}$  with central charge $c$
is a weak  $\bar{W}(0,c)$-module;

\item[{\rm (ii)}]
Any  weak $\psi$-twisted $\bar W(0,c)$-module   is a  naturally restricted  module for $\R$ with central charge $c$; and conversely, any restricted module for
 $\R$ with central charge $c$ is a weak $\psi$-twisted $\bar W(0,c)$-module.
\end{itemize}
\end{lemm}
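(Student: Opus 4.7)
The plan is to handle (i) and (ii) in parallel, reducing both to the universal property of $\bar{W}(0,c)$ as the Neveu--Schwarz vertex operator superalgebra of central charge $c$ together with the standard mode/field dictionary. The VOSA $\bar{W}(0,c)$ is strongly generated by its Virasoro element $\omega$ and its superconformal element $\tau$, with generating fields $L(z)=Y(\omega,z)$ and $G(z)=Y(\tau,z)$.

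For the direction ``weak (twisted) module $\Rightarrow$ restricted module'': let $M$ be a weak (resp.\ $\psi$-twisted weak) $\bar{W}(0,c)$-module and examine the mode expansions of $Y_M(\omega,z)$ and $Y_M(\tau,z)$. For an untwisted module, the odd field $Y_M(\tau,z)$ carries half-integer modes, while for a $\psi$-twisted module the twist shifts these modes to integers. Matching with \eqref{LTG511} and \eqref{def55441.1}, these operators are exactly $\{L_m\mid m\in\Z\}\cup\{G_p\mid p\in\tfrac12+\Z\}$ in case (i) and $\{L_m\mid m\in\Z\}\cup\{G_n\mid n\in\Z\}$ in case (ii). Unpacking the Jacobi identity against the OPEs displayed in the excerpt recovers precisely the Neveu--Schwarz (resp.\ Ramond) brackets with central charge $c$, so $M$ becomes an $\mathcal{N}$-module (resp.\ $\R$-module) of central charge $c$. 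The truncation axiom $v_n w=0$ for $n\gg 0$ applied to $v=\omega$ and $v=\tau$ then yields restrictedness.

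For the converse, given a restricted $\mathcal{N}$-module (resp.\ $\R$-module) $M$ of central charge $c$, define $L(z), G(z)$ on $M$ via \eqref{LTG511}, replacing $G(z)$ by its Ramond expansion $\sum_{n\in\Z}G_n z^{-n-2}$ in case (ii). Restrictedness of $M$ is exactly the lower-truncation property required for $L(z)$ and $G(z)$ to be fields on $M$. Standard Cauchy residue calculus converts the brackets \eqref{def55441.1} (and the NS analogue) into the OPEs stated in the excerpt, so the two fields are mutually local in case (i) and mutually $\psi$-twisted local in case (ii). By Li's reconstruction theorem for weak (resp.\ $\psi$-twisted weak) modules from local systems of fields, the vertex superalgebra generated by $L(z)$ and $G(z)$ inside $\mathrm{End}(M)[[z,z^{-1}]]$ acts on $M$; since its OPEs realise the NS algebra at central charge $c$, this vertex superalgebra is a quotient of $W(0,c)$, hence of $\bar{W}(0,c)$, endowing $M$ with the desired module structure.

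The main technical point, where the argument is less formal than it looks, is the twisted reconstruction in (ii): one must verify that a $\Z$-indexed family of Ramond-type fields on a restricted $\R$-module really does assemble into a $\psi$-twisted local system for $\bar{W}(0,c)$, with the correct mode bookkeeping ($\tau_n\leftrightarrow G_n$ rather than the untwisted $G_{n+\frac12}$). This is precisely the content of Li's $\psi$-twisted local-system theorem in \cite{L}; once that machinery is invoked, the remainder of the proof is routine formal-series manipulation.
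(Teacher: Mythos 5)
Your proposal is correct and takes essentially the same route as the paper, which gives no argument of its own for Lemma \ref{lemm51} but simply cites Li's local-system machinery in \cite{L,L1} --- precisely the mode dictionary plus (twisted) local-system reconstruction you invoke. Two minor points of bookkeeping: the $\psi$-twisted local-system theorem needed for part (ii) is the one in \cite{L1} (the untwisted case is \cite{L}), and in the converse direction the vertex superalgebra generated by $L(z),G(z)$ receives a homomorphism from $\bar W(0,c)$ directly because the creation property of the local system forces the image of $G_{-\frac{1}{2}}\mathbf{1}$ to vanish, which is cleaner than saying it is ``a quotient of $W(0,c)$, hence of $\bar W(0,c)$''.
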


The following  result appeared in \cite{LPX55}.
\begin{theo}\label{prop3.5}
Let       $H$ be a simple module of $\mathfrak{B}=\mathcal{N}_+\oplus\mathcal{N}_0$. Assume that the action  of $C$ on   $H$ is a scalar  $c$.   If  there exists $t\in\N$ such that $H$ satisfying  the following two conditions
  \begin{itemize}
\item[{\rm (1)}] the action of $L_{t}$ on   $H$  is  injective;
\item[{\rm (2)}]  $L_{m}H=0$ for all $m>t$,
\end{itemize} then   $\mathrm{Ind}_{\mathfrak{B},c}(H)$ is a simple $\mathcal{N}$-module.
\end{theo}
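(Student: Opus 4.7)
The plan is to mirror the proof of Theorem \ref{th1}, translating the combinatorial machinery of Section~3 to the half-integer $G$-indices of $\mathcal{N}$. Step one is the analogue of Lemma \ref{lemm31}. From the bracket $[L_{q-\frac{1}{2}}, G_{\frac{1}{2}}] = (\frac{3}{4} - \frac{q}{2}) G_q$ we obtain $G_q = \frac{4}{3-2q}[L_{q-\frac{1}{2}}, G_{\frac{1}{2}}]$; acting on $H$, whenever $q - \frac{1}{2} > t$ the hypothesis $L_{q-\frac{1}{2}} H = 0$ forces $L_{q-\frac{1}{2}}(G_{\frac{1}{2}} H) \subseteq L_{q-\frac{1}{2}} H = 0$, and hence $G_q H = 0$ for every half-integer $q \geq t + \frac{3}{2}$. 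This supplies the $G$-vanishing that is built into the hypothesis of Theorem \ref{th1} but must here be derived.

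Step two is to reindex. Let $\S'$ consist of almost-zero tuples $\mathbf{i} = (\ldots, i_2, i_1)$ with $i_{2k-1} \in \N$ and $i_{2k} \in \{0,1\}$, now encoding the Neveu--Schwarz PBW monomials
\[
g'_{\mathbf{i}} = \cdots \bigl(G_{-(k-\frac{1}{2})}^{i_{2k}} L_{-k}^{i_{2k-1}}\bigr) \cdots \bigl(G_{-\frac{1}{2}}^{i_2} L_{-1}^{i_1}\bigr) \in \U(\mathcal{N}_-).
\]
Weights $\W(\mathbf{i}) = \sum_k k\, i_{2k-1} + \sum_k (k - \tfrac{1}{2})\, i_{2k}$, depth $\mathbf{D}$, and the principal order $\succ$ are defined exactly as in Section~2; PBW yields a canonical expansion $\sum_{\mathbf{i}\in\S'} g'_{\mathbf{i}}\, v_{\mathbf{i},c}$ for elements of $\mathrm{Ind}_{\mathfrak{B},c}(H)$.

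Step three is the degree-reduction lemma, which bundles the analogues of Lemmas \ref{lemm3231} and \ref{lemm33}. For $0 \neq w \in \mathrm{Ind}_{\mathfrak{B},c}(H)$ with $\mathrm{deg}(w) = \mathbf{i}$ and $\widehat{k} = \min\{m : i_m \neq 0\}$, the claim is that $L_{t+k} w$ (when $\widehat{k} = 2k-1$) or $G_{t+k-\frac{1}{2}} w$ (when $\widehat{k} = 2k$) is nonzero with degree $\mathbf{i} - \epsilon_{\widehat{k}}$. Commuting the positive-index operator past $g'_{\mathbf{i}}$ using the Neveu--Schwarz brackets, the unique surviving contribution at the leading position comes from the single bracket $[L_{t+k}, L_{-k}] = -(t+2k) L_t$ or $[G_{t+k-\frac{1}{2}}, G_{-(k-\frac{1}{2})}] = -2L_t$, whose result lands in $H$ and is nonzero by the injectivity of $L_t$; all other contributions have strictly smaller $\W$ or $\mathbf{D}$, with step one killing any term in which a positive-index $G_q$ would otherwise land on $H$. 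Iterating the reduction from any nonzero $w$ eventually extracts a nonzero element of $\U(\mathcal{N})w \cap H$, and the simplicity of $H$ then forces $\U(\mathcal{N})w = \mathrm{Ind}_{\mathfrak{B},c}(H)$.

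The main obstacle, as in the Ramond case, is part~(b) of the Lemma \ref{lemm33} analogue: ruling out that some strictly smaller $\widetilde{\mathbf{i}} \prec \mathbf{i}$ also contributes the index $\mathbf{i} - \epsilon_{\widehat{k}}$ to $\mathrm{supp}(L_{t+k} w)$ or $\mathrm{supp}(G_{t+k-\frac{1}{2}} w)$. The three-subcase split---$\W(\widetilde{\mathbf{i}}) < \W(\mathbf{i})$; equal $\W$ but strictly smaller $\mathbf{D}$; equal $\W$ and $\mathbf{D}$ with $\widetilde{k} > \widehat{k}$---transfers verbatim, but each weight estimate now has to absorb the half-integer offsets from $[L_n, G_p] = (p - \frac{n}{2}) G_{n+p}$, which is the only point where the argument departs in detail from the Ramond proof.
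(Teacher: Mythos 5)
Your overall plan -- transporting the machinery of Lemmas \ref{lemm31}--\ref{lemm33} to $\mathcal{N}$ -- is the natural one (the paper itself does not prove Theorem \ref{prop3.5} but quotes it from \cite{LPX55}; the in-paper yardstick is the proof of Theorem \ref{th1}), but your step one is too weak, and this opens a genuine gap in step three. The bracket identity you use only gives $G_qH=0$ for half-integers $q\geq t+\frac{3}{2}$, and it cannot reach the boundary case $q=t+\frac{1}{2}$: the only bracket $[L_m,G_{\frac12}]$ producing $G_{t+\frac12}$ has $m=t$, and $L_t$ is injective rather than zero on $H$, so no vanishing follows this way. Yet $G_{t+\frac12}H=0$ is exactly what your degree analysis needs; note that in the Ramond case the corresponding boundary vanishing $G_rV=0$ is not derived but \emph{assumed} in hypothesis (2) of Theorem \ref{th1}, and it is used in Lemma \ref{lemm33} precisely to kill the term $[L_{r+k},G_{-k}]\propto G_r$ acting on $V$. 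In your setting, if $G_{t+\frac12}H\neq0$ were possible, then for $\widehat{k}=2k-1$ with $i_{2k}=1$ the bracket $[L_{t+k},G_{-(k-\frac12)}]=(\frac12-\frac{3k}{2}-\frac{t}{2})G_{t+\frac12}$ produces the contribution $g'_{\mathbf{i}-\epsilon_{2k}}\,G_{t+\frac12}v_{\mathbf{i},c}$, whose weight is $\W(\mathbf{i})-k+\frac12$, \emph{strictly larger} than $\W(\mathbf{i}-\epsilon_{2k-1})$; so your claim that ``all other contributions have strictly smaller $\W$ or $\mathbf{D}$'' is false. Worse, when $i_{2k}=0$ a smaller monomial $\widetilde{\mathbf{i}}=\mathbf{i}-\epsilon_{2k-1}+\epsilon_{2k}\prec\mathbf{i}$ occurring in $w$ can feed the \emph{same} index $\mathbf{i}-\epsilon_{2k-1}$ via $G_{t+\frac12}$ landing on $H$, so the nonvanishing of the coefficient at $\mathbf{i}-\epsilon_{\widehat{k}}$ -- the engine of the whole simplicity argument -- is no longer guaranteed.

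The fix is to prove the full analogue of Lemma \ref{lemm31}, namely $G_qH=0$ for all $q\geq t+\frac12$, and this genuinely requires the simplicity of $H$, not just bracket identities: set $\Upsilon=G_{t+\frac12}H$; since $G_{t+\frac12}^2=-L_{2t+1}$ annihilates $H$ one gets $\Upsilon\neq H$, and using the already-established vanishing for $q\geq t+\frac32$ one checks $L_n\Upsilon\subseteq\Upsilon$ for $n\geq0$ and $G_p\Upsilon\subseteq\Upsilon$ for $p\geq\frac12$ (the term $[G_p,G_{t+\frac12}]=-2L_{p+t+\frac12}$ kills $H$), whence $\Upsilon=0$. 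With this in hand the maximal degree of an operator acting nontrivially on $H$ is $t$, the estimate $\W(F w)\leq\W(w)-\deg(F)+t$ holds, and your three-subcase analysis goes through as in Lemma \ref{lemm33}. Two further points: your step-one formula divides by $3-2q$ and so breaks down at $q=\frac32$ when $t=0$ (that case again needs the submodule argument); and for $t=0$ no repair is possible, since the reduction bracket $[G_{k-\frac12},G_{-(k-\frac12)}]=-2L_0+\frac{(2k-1)^2-1}{12}C$ need not act injectively, and indeed for $t=0$ one finds $\mathcal{N}_+H=0$, so $\mathrm{Ind}_{\mathfrak{B},c}(H)$ is a Verma module, which is not always simple -- so, just as Theorem \ref{th1} requires $r\in\Z_{\geq2}$, your argument (and the statement as transcribed) really needs $t\geq1$.
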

For any   $t\in\N$, let $\mathfrak{N}_t$ be the set of simple $\mathfrak{B}$-modules
satisfying the condition in   Theorem \ref{prop3.5}.   For any   $r\in\Z_{\geq2}$, let $\mathfrak{R}_r$ be the set of simple $\B$-modules
satisfying the condition in   Theorem \ref{th1}. Set
$\mathfrak{N}=\bigcup_{t\in\N}\mathfrak{N}_t$ and $\mathfrak{R}=\bigcup_{r\in\Z_{\geq2}}\mathfrak{R}_r$.
By \cite{LPX55},  Theorem \ref{th2} and Lemma \ref{lemm51}, we have the   following results.
\begin{prop}
\begin{itemize}
\item[{\rm (1)}] The set of $\{\mathrm{Ind}_{\mathfrak{B},c}(H)\mid c\in\C, H\in \mathfrak{N}\}$ gives a complete list  of simple  weak  $\bar{W}(0,c)$-module under some conditions.
\item[{\rm (2)}] The set of $\{\mathrm{Ind}_{\B,c}(V)\mid c\in\C, V\in \mathfrak{R}\}$ gives a complete list of  simple weak $\psi$-twisted $\bar W(0,c)$-modules under some conditions.
 \end{itemize}
\end{prop}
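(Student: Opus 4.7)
The plan is to treat this proposition as a packaging result: its content is the combination of the construction theorems (Theorem \ref{th1} and Theorem \ref{prop3.5}) with the characterization Theorem \ref{th2}, translated from the Lie-superalgebra setting to the vertex-operator-superalgebra setting through Lemma \ref{lemm51}. My first move would be to make explicit what the phrase ``under some conditions'' means: for part (2) it is the injectivity hypothesis inherited from Theorem \ref{th2} (some $L_k$ with $k\in\Z_{\geq2}$ acts injectively on the simple weak $\psi$-twisted $\bar W(0,c)$-module, and on the resulting space of highest-weight-like vectors $V=\M_b$), and for part (1) it is the analogous condition inherited from Theorem \ref{prop3.5}.

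For part (2), I would argue both inclusions. Given $V\in\mathfrak{R}$, Theorem \ref{th1} says $\mathrm{Ind}_{\B,c}(V)$ is a simple $\R$-module of central charge $c$; I then need to check that it is restricted. This is a PBW-type check: every element is a finite sum of vectors $g_{\mathbf{i}}v_{\mathbf{i},c}$ with $v_{\mathbf{i},c}\in V$, and using the same degree-tracking as in Lemma \ref{lemm3231}, commuting any $L_m$ or $G_n$ with $m,n$ large enough past $g_{\mathbf{i}}$ strictly reduces degree until one lands on $V$, where the high modes act as zero by the defining property of $\mathfrak{R}$. Hence the module is restricted, and Lemma \ref{lemm51}(ii) converts it into a simple weak $\psi$-twisted $\bar W(0,c)$-module. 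Conversely, given a simple weak $\psi$-twisted $\bar W(0,c)$-module $\P$, Lemma \ref{lemm51}(ii) gives it the structure of a simple restricted $\R$-module of central charge $c$; under the injectivity condition, Theorem \ref{th2} realizes $\P\cong\mathrm{Ind}_{\B,c}(V)$ with $V\in\mathfrak{R}_b$, so $V\in\mathfrak{R}$.

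Part (1) is handled by the same scheme with the Neveu--Schwarz algebra $\mathcal{N}$ replacing $\R$: take a simple weak $\bar W(0,c)$-module, use Lemma \ref{lemm51}(i) to view it as a simple restricted $\mathcal{N}$-module, apply the $\mathcal{N}$-version of the classification (as cited from \cite{LPX55}, paralleling Theorem \ref{prop3.5}) to identify it with some $\mathrm{Ind}_{\mathfrak{B},c}(H)$ with $H\in\mathfrak{N}$, and for the reverse inclusion use Theorem \ref{prop3.5} together with the restrictedness check (identical in spirit to the $\R$-case) and then Lemma \ref{lemm51}(i).

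There is no deep new computation required; the only genuine work beyond invocation is the verification that the induced modules $\mathrm{Ind}_{\B,c}(V)$ and $\mathrm{Ind}_{\mathfrak{B},c}(H)$ are restricted. The main pitfall I would guard against is mishandling the ``some conditions'' clause: one must ensure that the injectivity hypothesis used to invoke Theorem \ref{th2} is exactly what distinguishes the simple weak modules in the list from other possible simple weak modules, so the classification statement remains truthful rather than vacuous.
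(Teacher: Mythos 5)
Your proposal is correct and matches the paper's (essentially unwritten) argument: the paper simply assembles Theorem \ref{prop3.5} (via \cite{LPX55}), Theorem \ref{th2} and Lemma \ref{lemm51}, exactly as you do. Your added verification that the induced modules $\mathrm{Ind}_{\B,c}(V)$ and $\mathrm{Ind}_{\mathfrak{B},c}(H)$ are restricted (via the degree bound of Lemma \ref{lemm3231}) and your explicit reading of ``under some conditions'' as the $L_k$-injectivity hypothesis are sensible elaborations of the same route, not a different one.
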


\section{Examples}
In this section, we show some examples of simple restricted  $\R$-modules.

\subsection{Simple induced  modules}
Let $\mathfrak{l}= \C x+\C y$ be  the $2$-dimensional
solvable Lie algebra with basis $\{x,y\}$, which   satisfies the non-trivial Lie bracket $[x,y]=y$.
 Basically,  $\mathfrak{l}$ is a  subalgebra of classical $3$-dimensional
  Lie algebra $\mathfrak{sl}(2)$.  We  construct  a class of  induced restricted $\R$-module  by using  a $\C[y]$-torsion-free simple $\mathfrak{l}$-module $\mathfrak{k}=(\partial-1)^{-1}\C[\partial^{\pm1}]$ defined  in \cite[Example 13]{LMZ}, whose
structure is given by $$x\cdot f(\partial)=\partial\frac{d}{d\partial}f(\partial)+\frac{f(\partial)}{\partial^2(\partial-1)},\ y\cdot f(\partial)=\partial f(\partial),\ \forall
f(\partial)\in\mathfrak{k}.$$
Denote
$\widehat{\B}=\bigoplus_{m\geq0}\C L_m\oplus\bigoplus_{n\geq2}\C G_n.$ Then we can extend  $\mathfrak{l}$-module  to a $\widehat{\B}$-module   $\widehat{V}_{\mathfrak{k}}=(\partial-1)^{-1}\C[\partial^{\pm1}]
\oplus G_1(\partial-1)^{-1}\C[\partial^{\pm1}]$    by defining
\begin{eqnarray*}
&&L_0\cdot f(\partial)=2x\cdot f(\partial),\ L_0\cdot (G_1f(\partial))=G_1\big(2x\cdot f(\partial)+f(\partial)\big),
\\&& L_{m}\cdot f(\partial)=G_n\cdot f(\partial)= L_{m}\cdot (G_1f(\partial))=G_n\cdot (G_1f(\partial))=0,
\\&& L_2\cdot f(\partial)=y\cdot f(\partial),\ L_2\cdot (G_1f(\partial))=G_1\big(y\cdot f(\partial)\big),
\ C\cdot f(\partial)=cf(\partial),
\end{eqnarray*}
where  $c\in\C,m\in\Z_{\geq3}\bigcup\{1\},n\in \Z_{\geq2},f(\partial)\in\mathfrak{k}$.
Note that $G_1^2f(\partial)=-y\cdot f(\partial)$. Clearly,  $V_{\mathfrak{k}}=U(\B)\otimes_{U(\widehat{\B})}\widehat{V}_\mathfrak{k}$ is a  simple $\B$-module.
By Theorem \ref{th1}, we get the simple induced $\R$-modules $\mathrm{Ind}_{\B,c}(V_{\mathfrak{k}})$.

\subsection{Whittaker modules}
For $m,n\in\Z$, we denote
$$\widehat{\b}=\bigoplus_{m\geq1}\C L_m\oplus\bigoplus_{n\geq2}\C G_n.$$
Let  $\phi:\widehat{\b}\rightarrow\C$ be a non-trivial Lie superalgebra homomorphism and $\phi(G_2)=0$.  Then we have  $\phi(L_m)=\phi(G_n)=0$ for $m>2$, $n>1$. Let $\mathfrak{s}_\phi=\C v_{\bar0}\oplus\C v_{\bar1}$ be  a $2$-dimensional vector space with
$$xv_{\bar0}=\phi(x)v_{\bar0}, \ v_{\bar1}=G_1v_{\bar0}, \  Cv_{\bar0}=cv_{\bar0}, \ Cv_{\bar1}=cv_{\bar1}$$  for all $x\in \widehat{\b}.$
Clearly, if $\phi(L_{2})\neq0$,   $\mathfrak{s}_{\phi}$ is a simple $\widehat{\b}$-module and  $\mathrm{dim}(\mathfrak{s}_{\phi})=2$.
Now we consider the induced module
$$M_\phi=\U(\widehat{\B})\otimes_{\U(\widehat{\b})} \mathfrak{s}_\phi=\C[L_0]v_{\bar0}\oplus \C[L_0](G_1v_{\bar0}).$$
 Let $V_{\phi}=\U(\B)\otimes_{\U(\widehat{\B})}M_\phi$. It is easy to check that $V_{\phi}$ is a simple $\B$-module if $\phi(L_{2})\neq0$.
 When $\phi(L_{2})\neq0$,
  the simple induced $\R$-modules $\mathrm{Ind}_{\B,c}(V_{\phi})$  in Theorem \ref{th1}  are so-called classical Whittaker modules (see \cite{LPX}).

\subsection{High order Whittaker  modules}
  In this section,  we show   a generalization version of Whittaker modules   of $\R$ called the high order Whittaker modules.

For $m,n\in\Z$, $s\in\Z_{\geq2}$,
we denote   $$\Gamma{(s)}=\bigoplus_{m\geq s}\C L_{m}\oplus\bigoplus_{n\geq s}\C G_{n}.$$
 Let $\phi_s$ be a Lie superalgebra homomorphism $\phi_s:\Gamma{(s)}\rightarrow\C$ for  $s\in\Z_{\geq2}$.
Then we get   $\phi_s(L_m)=\phi_s(G_n)=0$ for $m>2s, n>2s-1$.
Assume that $\mathfrak{s}_{\phi_s}=\C v_{\bar0}\oplus\C v_{\bar1}$ is a $2$-dimensional vector space with
$$xv_{\bar0}=\phi(x)v_{\bar0},\ v_{\bar1}=G_1v_{\bar0}, \ Cv_{\bar0}=cv_{\bar0},\ Cv_{\bar1}=cv_{\bar1}$$  for all $x\in \Gamma{(s)}.$
If $\phi_s(L_{2s})\neq0$,  $\mathfrak{s}_{\phi_s}$ is a simple $\Gamma{(s)}$-module and  $\mathrm{dim}(\mathfrak{s}_{\phi_s})=2$.
 Consider the induced module
$$M_{\phi_s}=\U(\widehat{\mathcal{B}})\otimes_{\U(\Gamma{(s)})} \mathfrak{s}_{\phi_s}.$$
  Denote $V_{\phi_s}=\U(\B)\otimes_{\U(\widehat{\B})}M_{\phi_s}$. It is clear that  $V_{\phi_s}$ is a simple $\B$-module if $\phi(L_{2s})\neq0$.
The corresponding simple  $\R$-modules $\mathrm{Ind}_{\B,c}(V_{\phi_s})$  in Theorem \ref{th1}  are exactly the high order Whittaker modules.

%\section*{Data availability}
%The data that support the findings of this study are available from the corresponding author upon reasonable request.

\section*{Acknowledgements}
This work was carried out during the  author's visit to University of California, Santa Cruz. The author would like to
thank Professor Chongying Dong  for the warm hospitality during his visit.
This work was supported by the National Natural Science Foundation of China
(Grant Nos. 11801369,  12171129), the Overseas Visiting Scholars Program  of Shanghai Lixin University of  Accounting and Finance (Grant No. 2021161). The
 author thanked the Professor Hongyan Guo for telling him   reference   \cite{L1}. %At last the author   thanked the referee for valuable suggestions to improve the paper.

\small 
\bigskip

Haibo Chen
\vspace{2pt}

  1. School of  Statistics and Mathematics, Shanghai Lixin University of  Accounting and Finance,   Shanghai
201209, China

\vspace{2pt}
2. Department of Mathematics, Jimei University, Xiamen, Fujian 361021, China

\vspace{2pt}
Hypo1025@163.com

\end{document}